\newtheorem{theorem}{Theorem}[section]
\newtheorem{lemma}{Lemma}[section]
\newtheorem{remark}{Remark}[section]
\newtheorem{definition}{Definition}[section]
\newtheorem{corollary}{Corollary}[section]
\newtheorem{proposition}{Proposition}[section]
\numberwithin{equation}{section}
\begin{document}

\title{Generalized Euclidean Operator Radius}
\author{Mohammad W. Alomari, Mohammad Sababheh, Cristian Conde, and Hamid Reza Moradi}
\subjclass[2010]{Primary 47A12, Secondary 47A30, 26A51}
\keywords{Generalized Euclidean operator, numerical radius, Euclidean operator radius, convex function}
\begin{abstract}
In this paper, we introduce the $f-$operator radius of Hilbert space operators as a generalization of the Euclidean operator radius and the $q-$operator radius. Properties of the newly defined radius are discussed, emphasizing how it extends some known results in the literature.

\end{abstract}

\maketitle
\pagestyle{myheadings}
\markboth{\centerline {Generalized Euclidean Operator Radius}}
{\centerline {M. W. Alomari et al. }}
\bigskip
\bigskip
\section{Introduction}
Let $\mathbb{B}\left( \mathscr{H} \right)$ denote the $C^*-$algebra of all bounded linear operators on  a complex Hilbert space $\mathscr{H}.$ For $T\in \mathbb{B}\left( \mathscr{H} \right),$ the operator norm and the numerical radius of $T$ are defined, respectively, by 
$$\|T\|=\sup_{\|x\|=1}\|Tx\|\;{\text{ and }}\;\omega(T)=\sup_{\|x\|=1}
|\left<Tx,x\right>|.$$

It is well known that $\omega(\cdot)$ defines a norm on $\mathbb{B}\left( \mathscr{H} \right)$, that is equivalent to the operator norm via the  relation

\begin{align}\label{eq_eq_w_norm}
\frac{1}{2}\|T\|\le \omega(T)\le \|T\|,\;T\in \mathbb{B}\left( \mathscr{H} \right).
\end{align}

It is interesting to find possible bounds of $\omega(\cdot)$ in terms of $\|\cdot\|$ since the calculations of $\|\cdot\|$ are much easier than those of $\omega(\cdot).$ We refer the reader to \cite{O-F, bou, Mor_lama, Mor_filomat, Sab_lama, sab_num_lama, sahoo, Sattari, satt_mosl_shebr, sheyb, Sab_aofa, Z-M} as a recent list of references treating numerical radius and operator norm inequalities.

Among the most well-established interesting results in this direction are the following inequalities due to Kittaneh \cite{Ki, Kil}
\begin{equation*}
\omega(T)\le \frac{1}{2}\|\;|T|+|T^*|\;\|,
\end{equation*}
\begin{equation}\label{ineq_kitt_2}
\omega^2(T)\le \frac{1}{2}\|\;|T|^2+|T^*|^2\|,
\end{equation}
and
\begin{equation}\label{ineq_kitt_3}
\omega(T)\le \frac{1}{2}\left(\|T\|+\|T^2\|^{\frac{1}{2}}\right),
\end{equation}
where $T^*$ is the adjoint operator of $T$ and $|T|=(T^*T)^{1/2}$.

Extending the numerical radius, the Euclidean operator radius of the operators $T_1,\cdots, T_n\in\mathbb{B}\left( \mathscr{H} \right)$ was defined in \cite{2} as
\[{{\omega }_{e}}\left( {{T}_{1}},\ldots ,{{T}_{n}} \right)=\underset{\left\| x \right\|=1}{\mathop{\sup }}\,{{\left( \sum\limits_{j=1}^{n}{{{\left| \left\langle {{T}_{j}}x,x \right\rangle \right|}^{2}}} \right)}^{\frac{1}{2}}}.\]
This was also generalized in \cite{3} to
\[{{\omega }_{q}}\left( {{T}_{1}},\ldots ,{{T}_{n}} \right)=\underset{\left\| x \right\|=1}{\mathop{\sup }}\,{{\left(\sum_{j=1}^{n}{{{\left| \left\langle {T_j}x,x \right\rangle \right|}^{q}}} \right)}^{\frac{1}{q}}};\text{ }q\ge 1.\]

We refer the reader to \cite{Alomari,baj, Drag_eucl,Drag_Hind, satt_mosl_shebr,sheik_mosl} as a list of references treating properties and significance of $\omega_e$ and $\omega_q.$

In the literature, it is interesting to introduce and define new related numerical radii or operator radii in a way that extends some well-known concepts. For this particular concern, we refer the reader to \cite{O-F, kais, Sab_aofa}, where a discussion of other types of numerical radii has been presented.

This paper introduces a generalized form of $\omega_e$ and $\omega_q$ that depends on a certain function $f$. It turns out that both $\omega_e$ and $\omega_q$ are special cases of this new concept, which we define as follows.

\begin{definition}\label{2}
Let $T_1,\ldots,T_n\in \mathbb{B}(\mathscr{H})$ and let $f:[0,\infty)\to [0,\infty)$ be a continuous increasing function with $f(0)=0$. We define the $f-$operator radius of the operators $T_1,\ldots,T_n$ by
\[{{\omega }_{f}}\left( {{T}_{1}},\ldots ,{{T}_{n}} \right)=\underset{\left\| x \right\|=1}{\mathop{\sup }}\,{{f}^{-1}}\left(\sum_{j=1}^{n}{f\left( \left| \left\langle {T_j}x,x \right\rangle \right| \right)} \right).\]
\end{definition}
Thus, when $f(t)=t^2$, $\omega_f=\omega_e$, and when $f(t)=t^q, \omega_f=\omega_q,$ for $q\ge 1.$

The quantities $\omega_e,\omega_q$ were defined in \cite{3, 2} as norms on $\mathbb{B}(\mathscr{H})\times\cdots\times \mathbb{B}(\mathscr{H}).$ In what follows, we show norm properties of  $\omega_f$.

It is implicitly understood that $f:[0,\infty)\to [0,\infty)$ is a continuous increasing function with $f(0)=0$, whenever we write $\omega_f.$

The Davis-Wielandt radius of $T\in \mathbb B\left( \mathscr H \right)$ is defined as
	\[d\omega \left( T \right)=\underset{\left\| x \right\|=1}{\mathop{\sup }}\,\left\{ \sqrt{{{\left| \left\langle Tx,x \right\rangle  \right|}^{2}}+{{\left\| Tx \right\|}^{4}}} \right\}.\]
It is not hard to see that $d\omega \left( T \right)$ is unitarily invariant, but it does not define a norm on $\mathbb B\left( \mathscr H \right)$.
It is well-known that
\[\max \left\{ \omega \left( T \right),{{\left\| T \right\|}^{2}} \right\}\le d\omega \left( T \right)\le \sqrt{{{\omega }^{2}}\left( T \right)+{{\left\| T \right\|}^{4}}}.\]
Putting $n=2$, ${{T}_{1}}=T$, and ${{T}_{2}}={{T}^{*}}T$, in Definition \ref{2}, we deliver
	\[\begin{aligned}
   {{\omega }_{f}}\left( T,{{T}^{*}}T \right)&=\underset{\left\| x \right\|=1}{\mathop{\sup }}\,{{f}^{-1}}\left( f\left( \left| \left\langle Tx,x \right\rangle  \right| \right)+f\left( \left| \left\langle {{T}^{*}}Tx,x \right\rangle  \right| \right) \right) \\ 
 & =\underset{\left\| x \right\|=1}{\mathop{\sup }}\,{{f}^{-1}}\left( f\left( \left| \left\langle Tx,x \right\rangle  \right| \right)+f\left( \left| \left\langle Tx,Tx \right\rangle  \right| \right) \right) \\ 
 & =\underset{\left\| x \right\|=1}{\mathop{\sup }}\,{{f}^{-1}}\left( f\left( \left| \left\langle Tx,x \right\rangle  \right| \right)+f\left( {{\left\| Tx \right\|}^{2}} \right) \right)  
\end{aligned}\]
which provides an extension of the Davis-Wielandt radius of $T$. Notice that when $f(t)=t^2$, $\omega_f(T,T^*T)=d\omega(T).$

We need the following lemmas throughout the subsequent sections. The first lemma has been a helpful tool in studying operator inequalities in the literature.
\begin{lemma}\label{lem_conv_inner}
\cite[(4.24)]{bhatia} Let $f$ be a convex function defined on a real interval $I$ and let $T\in \mathbb{B}\left(
\mathscr{H}\right)$ be a self-adjoint operator with spectrum in $I$. Then $f\left( {\left\langle {Tx,x} \right\rangle } \right) \le \left\langle {f\left( T \right)x,x} \right\rangle $ for all unit vectors $ x\in \mathscr{H}$.
\end{lemma}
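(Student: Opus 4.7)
The plan is to reduce the operator inequality to the classical scalar Jensen inequality via the spectral theorem. Since $T$ is self-adjoint with $\sigma(T)\subseteq I$, the spectral decomposition gives $T=\int_{\sigma(T)}\lambda\,dE(\lambda)$, where $E$ is the projection-valued spectral measure of $T$. For any unit vector $x\in\mathscr{H}$, the set function $\mu_x(\Delta):=\langle E(\Delta)x,x\rangle$ is a Borel probability measure on $\sigma(T)$, because $E$ is a resolution of the identity and $\|x\|=1$.

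Next I would translate both sides of the desired inequality into integrals against $\mu_x$. Standard spectral integration yields
\[
\langle Tx,x\rangle=\int_{\sigma(T)}\lambda\,d\mu_x(\lambda),\qquad \langle f(T)x,x\rangle=\int_{\sigma(T)}f(\lambda)\,d\mu_x(\lambda),
\]
the second identity coming from the continuous functional calculus. At this step I need $f$ to be (at least) bounded and Borel measurable on $\sigma(T)$; this is granted because a convex function on $I$ is continuous on the interior, and $\sigma(T)\subseteq I$ is compact, so $f$ is bounded and continuous on $\sigma(T)$ (with any boundary values handled by a routine limiting argument).

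Finally I would invoke the scalar Jensen inequality: for any Borel probability measure $\mu$ on $I$ and any convex $f:I\to\mathbb{R}$,
\[
f\!\left(\int_I \lambda\,d\mu(\lambda)\right)\le \int_I f(\lambda)\,d\mu(\lambda).
\]
Applying this with $\mu=\mu_x$ and substituting the two integral identities above gives
\[
f(\langle Tx,x\rangle)=f\!\left(\int\lambda\,d\mu_x\right)\le \int f(\lambda)\,d\mu_x=\langle f(T)x,x\rangle,
\]
which is exactly what is to be proved.

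There is no substantive obstacle: once the spectral measure is in hand, the statement reduces to the scalar Jensen inequality against a probability measure. The only point worth a word of care is making sure $f(T)$ is well-defined through the functional calculus, which is guaranteed by the continuity of convex functions on the interior of $I$ together with compactness of $\sigma(T)$. Thus the entire proof is the spectral-theorem reduction followed by a one-line Jensen step.
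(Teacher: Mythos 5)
Your argument is correct: reducing to the scalar Jensen inequality via the spectral measure $\mu_x(\Delta)=\langle E(\Delta)x,x\rangle$ is the standard proof of this fact, and the paper itself offers no proof at all --- the lemma is simply quoted from Bhatia \cite[(4.24)]{bhatia}. The only point to phrase carefully is the one you flag: a convex $f$ on $I$ may fail to be continuous at an endpoint of $I$, but it is always Borel measurable, so $f(T)$ is defined by the Borel functional calculus and the identity $\langle f(T)x,x\rangle=\int f\,d\mu_x$ still holds; and if the barycenter $\int\lambda\,d\mu_x$ lies at an endpoint of the support then $\mu_x$ is a point mass there, so Jensen's inequality holds in that degenerate case as well. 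With that remark, the proof is complete and in the intended spirit of the cited source.
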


The second lemma is a useful characterization of numerical radii.
\begin{lemma}\label{lemma_theta}
 \cite {Z-M} Let $T\in \mathbb{B}\left( \mathscr{H} \right)$. Then
	$$\omega(T)=\sup_{\theta \in \mathbb R}\left\|\mathfrak{R}\left(e^{\mathrm i\theta}T\right)\right\|,$$ where $\mathfrak{R}(T)$ is the real part of the operator $T$, defined by $\mathfrak{R} T=\frac{T+T^*}{2}.$
\end{lemma}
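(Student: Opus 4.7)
The plan is to establish the equality by proving the two inequalities separately. For any $\theta \in \mathbb{R}$, the operator $\mathfrak{R}(e^{\mathrm{i}\theta}T) = (e^{\mathrm{i}\theta}T + e^{-\mathrm{i}\theta}T^*)/2$ is self-adjoint, which means its operator norm coincides with its numerical radius. This fact, together with the identity $\langle \mathfrak{R}(e^{\mathrm{i}\theta}T)x, x\rangle = \mathfrak{R}(e^{\mathrm{i}\theta}\langle Tx, x\rangle)$ obtained by passing the inner product through the real-part operation, drives both directions.

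For the bound $\sup_{\theta}\|\mathfrak{R}(e^{\mathrm{i}\theta}T)\| \le \omega(T)$, I would fix $\theta \in \mathbb{R}$ and a unit vector $x \in \mathscr{H}$, and estimate
$$|\langle \mathfrak{R}(e^{\mathrm{i}\theta}T)x,x\rangle| = |\mathfrak{R}(e^{\mathrm{i}\theta}\langle Tx,x\rangle)| \le |e^{\mathrm{i}\theta}\langle Tx,x\rangle| = |\langle Tx,x\rangle| \le \omega(T).$$
Taking the supremum over unit vectors gives $\omega(\mathfrak{R}(e^{\mathrm{i}\theta}T)) \le \omega(T)$, and by self-adjointness the left side equals $\|\mathfrak{R}(e^{\mathrm{i}\theta}T)\|$. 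Finally, taking the supremum over $\theta$ concludes this direction.

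For the reverse inequality, I would exploit the phase freedom: given a unit vector $x$, write $\langle Tx,x\rangle = r e^{\mathrm{i}\varphi}$ with $r \ge 0$, and choose $\theta = -\varphi$. Then $e^{\mathrm{i}\theta}\langle Tx,x\rangle = r$ is nonnegative, so $\langle \mathfrak{R}(e^{\mathrm{i}\theta}T)x, x\rangle = r = |\langle Tx,x\rangle|$. Consequently,
$$|\langle Tx,x\rangle| \le \|\mathfrak{R}(e^{\mathrm{i}\theta}T)\| \le \sup_{\theta' \in \mathbb{R}} \|\mathfrak{R}(e^{\mathrm{i}\theta'}T)\|,$$
and supremizing over unit vectors $x$ yields $\omega(T) \le \sup_{\theta'\in\mathbb{R}}\|\mathfrak{R}(e^{\mathrm{i}\theta'}T)\|$.

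No serious obstacle arises here; the only subtle point is that in the second step the rotation angle $\theta$ depends on $x$, but this is harmless because $x$ is immediately absorbed into the supremum after $\theta$ is absorbed into its own supremum. The essential inputs are the two standing facts that for a self-adjoint operator $S$ one has $\|S\| = \omega(S)$, and that every complex number can be rotated onto the nonnegative real axis.
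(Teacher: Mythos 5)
Your proof is correct. The paper gives no argument for this lemma at all---it is simply quoted from Yamazaki \cite{Z-M}---so your two-inequality argument supplies a proof where the paper only cites one, and it is the standard proof of this fact: the identity $\left\langle \mathfrak{R}\left(e^{\mathrm i\theta}T\right)x,x\right\rangle=\mathfrak{R}\left(e^{\mathrm i\theta}\left\langle Tx,x\right\rangle\right)$, the equality $\|S\|=\omega(S)$ for self-adjoint $S$ in one direction, and the phase rotation $\theta=-\varphi$ in the other. The only delicate point, the dependence of the chosen angle on the unit vector $x$, is handled correctly, since you pass to $\sup_{\theta'\in\mathbb R}\left\|\mathfrak{R}\left(e^{\mathrm i\theta'}T\right)\right\|$ before taking the supremum over $x$.
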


We also need the following lemma, which holds for convex functions with $f(0)\le 0.$
\begin{lemma}\label{superadditive}
If $f:[0,\infty)\to [0,\infty)$ is a convex function with $f(0)=0$, then $f$ is superadditive. That is
$$ f(a+b)\ge f(a)+f(b),$$ for $a,b\ge 0$. The inequality is reversed when $f:[0,\infty)\to [0,\infty)$ is concave, without having $f(0)=0$.
\end{lemma}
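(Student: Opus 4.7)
My plan is to reduce both assertions to the one-dimensional convexity/concavity inequality applied at the endpoints $0$ and $a+b$. Assume first that $f$ is convex with $f(0)=0$, and take $a,b\geq 0$. The trivial case $a+b=0$ gives $f(0)+f(0)=0=f(0)$, so I may assume $a+b>0$. The key identity is
\[
a=\frac{a}{a+b}(a+b)+\frac{b}{a+b}\cdot 0,\qquad b=\frac{b}{a+b}(a+b)+\frac{a}{a+b}\cdot 0,
\]
which expresses $a$ and $b$ as convex combinations of $a+b$ and $0$.

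Next I would apply the definition of convexity to each of these expressions to obtain
\[
f(a)\le\frac{a}{a+b}f(a+b)+\frac{b}{a+b}f(0),\qquad f(b)\le\frac{b}{a+b}f(a+b)+\frac{a}{a+b}f(0).
\]
Using $f(0)=0$, the right-hand sides simplify to $\tfrac{a}{a+b}f(a+b)$ and $\tfrac{b}{a+b}f(a+b)$, respectively. Adding the two inequalities then gives $f(a)+f(b)\le f(a+b)$, which is the claimed superadditivity.

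For the concave case, the same convex combination decompositions are used, but every inequality reverses, yielding
\[
f(a)+f(b)\ge f(a+b)+f(0).
\]
Since $f$ maps into $[0,\infty)$, we have $f(0)\ge 0$, and the bound $f(a)+f(b)\ge f(a+b)$ (subadditivity) follows \emph{without} needing $f(0)=0$; this explains the parenthetical remark in the statement.

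There is no real obstacle here: the entire argument rests on the classical observation that a convex function vanishing at $0$ satisfies $f(\lambda t)\le \lambda f(t)$ for $\lambda\in[0,1]$, and the symmetric use of this bound applied to $\lambda=a/(a+b)$ and $\lambda=b/(a+b)$. The only care needed is the harmless separation of the degenerate case $a+b=0$ and, in the concave case, invoking nonnegativity of $f$ at $0$ to discard the residual $f(0)$ term.
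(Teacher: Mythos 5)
Your proof is correct. The paper itself states this lemma without any proof, treating it as a standard fact, so there is no argument of the authors to compare against; your reduction — writing $a=\frac{a}{a+b}(a+b)+\frac{b}{a+b}\cdot 0$ and $b=\frac{b}{a+b}(a+b)+\frac{a}{a+b}\cdot 0$ and using $f(\lambda t)\le \lambda f(t)$ for $\lambda\in[0,1]$ — is precisely the classical argument the authors implicitly rely on, and your handling of the degenerate case $a+b=0$ and the observation that the concave case needs only $f(0)\ge 0$ (automatic from the codomain) correctly accounts for the parenthetical remark in the statement.
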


Recall that the Aluthge transform $\widetilde{T}$ of $T\in \mathbb{B}\left( \mathscr{H} \right)$ is defined by $\widetilde{T}=|T|^{\frac{1}{2}}U|T|^{\frac{1}{2}},$ where $U$ is  the partial isometry appearing in the polar decomposition $T=U|T|$ of $T$, \cite{alu}.
Yamazaki showed the following better estimates of \eqref{ineq_kitt_3} than \cite{Z-M}
\begin{equation}\label{ineq_yama_1}
\omega \left( T \right)\le \frac{1}{2}\left( \left\| T \right\|+\omega \left( \widetilde{T} \right) \right).
\end{equation}

\section{Further discussion of $\omega_f$}
In this section, we discuss the quantity $\omega_f$. This includes basic properties and possible relations with the numerical radius $\omega$ and the operator norm $\|\cdot\|$. More applications to numerical radius bounds will be discussed too.

We begin with the following basic properties of $\omega_f.$
\begin{proposition}
Let $T_1,\ldots,T_n\in\mathbb{B}(\mathscr{H})$ and let $f:[0,\infty)\to [0,\infty)$ be a continuous increasing function with $f(0)=0$. Then
\begin{itemize}
\item[(i)] ${{\omega }_{f}}\left( {{T}_{1}},\ldots ,{{T}_{n}} \right)=0\Leftrightarrow {{T}_{1}}=\cdots ={{T}_{n}}=0$.
\item[(ii)] ${{\omega }_{f}}\left( \alpha {{T}_{1}},\ldots ,\alpha {{T}_{n}} \right)=\left| \alpha \right|{{\omega }_{f}}\left( {{T}_{1}},\ldots ,{{T}_{n}} \right)$ for all $\alpha \in \mathbb{C}$, provided that $f$ is multiplicative.
\item[(iii)] ${{\omega }_{f}}\left( {{T}_{1}}+T_{1}^{'},\ldots ,{{T}_{n}}+T_{n}^{'} \right)\le {{\omega }_{f}}\left( {{T}_{1}},\ldots ,{{T}_{n}} \right)+{{\omega }_{f}}\left( T_{1}^{'},\ldots ,T_{n}^{'} \right)$, provided that $f$ is geometrically convex. That is, $f\left(\sqrt{ab}\right)\le \sqrt{f(a)f(b)}.$
\item[(iv)] ${{\omega }_{f}}\left( {{T}_{1}},\ldots ,{{T}_{n}} \right)={{\omega }_{f}}\left( {{T^*}_{1}},\ldots ,{{T^*}_{n}} \right).$
\item[(v)] If ${{U}_{1}},\ldots ,{{U}_{n}}$ are unitary, then
\[{{\omega }_{f}}\left( U_{1}^{*}{{T}_{1}}{{U}_{1}},\ldots ,U_{n}^{*}{{T}_{n}}{{U}_{n}} \right)={{\omega }_{f}}\left( {{T}_{1}},\ldots ,{{T}_{n}} \right).\]

\item[(vi)] If $g:[0,\infty)\to [0,\infty)$ is an injective function such that $g(0)=0$, and $f\circ g^{-1}$ is convex, then $$\omega_f(T_1,\ldots,T_n)\le \omega_g(T_1,\ldots,T_n).$$
\end{itemize}
\end{proposition}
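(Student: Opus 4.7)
The plan is to dispatch items (i), (ii), (iv), (v) by directly unwrapping Definition \ref{2}, and to spend more effort on (iii) and (vi).

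For (i), the $(\Leftarrow)$ direction is immediate from $f(0)=0$. The reverse direction uses that $f$ is continuous, strictly increasing on $[0,\infty)$ with $f(0)=0$, hence vanishes only at $0$; so $f^{-1}\bigl(\sum_j f(|\langle T_j x, x\rangle|)\bigr)=0$ for every unit $x$ forces $\langle T_j x, x\rangle = 0$ for each $j$ and each unit $x$, which gives $T_j = 0$ by the usual polarization argument. For (ii), multiplicativity of $f$ transfers to $f^{-1}$, and $|\langle \alpha T_j x, x\rangle|=|\alpha|\,|\langle T_j x, x\rangle|$ yields $f^{-1}\bigl(f(|\alpha|)\sum_j f(|\langle T_jx,x\rangle|)\bigr)=|\alpha|\,f^{-1}\bigl(\sum_j f(|\langle T_jx,x\rangle|)\bigr)$. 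For (iv), $\langle T_j^* x, x\rangle=\overline{\langle T_j x, x\rangle}$ has the same modulus, so the summands match term by term. For (v), reading the statement as conjugation by a common unitary $U$, the substitution $y = Ux$ preserves the unit sphere and shows the two radii agree.

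For (iii), applying the triangle inequality to each $|\langle (T_j+T_j')x, x\rangle|$ and using monotonicity of $f$, it suffices to establish the Minkowski-type inequality
\[
f^{-1}\Bigl(\sum_{j=1}^n f(a_j+b_j)\Bigr)\le f^{-1}\Bigl(\sum_{j=1}^n f(a_j)\Bigr)+f^{-1}\Bigl(\sum_{j=1}^n f(b_j)\Bigr)
\]
for nonnegative reals $a_j,b_j$. Writing $A := f^{-1}(\sum_j f(a_j))$ and $B := f^{-1}(\sum_j f(b_j))$, the target reduces to $\sum_j f(a_j+b_j)\le f(A+B)$. The plan is a Mulholland-type argument: combine the geometric convexity hypothesis $f(\sqrt{xy})\le\sqrt{f(x)f(y)}$ with a Cauchy--Schwarz pairing applied to the weighted cross-terms, matching $a_j/A$ against $b_j/B$ and using geometric convexity as the abstract substitute for Hölder's inequality that one would invoke directly in the power case $f(t)=t^q$. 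This is the main technical obstacle of the proposition, since for general $f$ the classical expansion of $f(a_j+b_j)$ is unavailable and the proof must route through the multiplicative structure provided by geometric convexity.

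For (vi), set $h := f\circ g^{-1}:[0,\infty)\to[0,\infty)$, which is convex by hypothesis and satisfies $h(0)=f(g^{-1}(0))=f(0)=0$. Lemma \ref{superadditive} then forces $h$ to be superadditive, and by induction $\sum_j h(c_j)\le h\bigl(\sum_j c_j\bigr)$ for nonnegative $c_j$. Taking $c_j := g(|\langle T_j x, x\rangle|)$ gives
\[
\sum_j f(|\langle T_j x, x\rangle|)=\sum_j h(c_j)\le h\Bigl(\sum_j c_j\Bigr)=f\Bigl(g^{-1}\Bigl(\sum_j g(|\langle T_j x, x\rangle|)\Bigr)\Bigr).
\]
Applying the monotone $f^{-1}$ and then taking the supremum over unit vectors $x$ yields $\omega_f(T_1,\ldots,T_n)\le \omega_g(T_1,\ldots,T_n)$, completing the plan.
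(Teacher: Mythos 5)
Your items (i), (ii), (iv) and (vi) run along the same lines as the paper's proof (for (vi) your argument with $h=f\circ g^{-1}$, superadditivity from Lemma \ref{superadditive}, and the substitution $c_j=g(|\langle T_jx,x\rangle|)$ is essentially verbatim what the paper does). The genuine gap is in (iii). After the triangle-inequality reduction you arrive at exactly the Minkowski-type inequality
$f^{-1}\bigl(\sum_{j} f(a_j+b_j)\bigr)\le f^{-1}\bigl(\sum_{j} f(a_j)\bigr)+f^{-1}\bigl(\sum_{j} f(b_j)\bigr)$,
but you never prove it: you only announce ``a Mulholland-type argument'' combining geometric convexity with ``a Cauchy--Schwarz pairing applied to the weighted cross-terms.'' That inequality \emph{is} the entire technical content of (iii); it is Mulholland's generalization of Minkowski's inequality, and the paper disposes of it by citing \cite[Corollary 1.1]{1} rather than reproving it. Mulholland's proof is not a one-line Cauchy--Schwarz application (it requires establishing a H\"older-type companion inequality for geometrically convex functions, and the classical statement carries additional hypotheses such as strict monotonicity and convexity of $f$), so reducing to $\sum_j f(a_j+b_j)\le f(A+B)$ and gesturing at ``matching $a_j/A$ against $b_j/B$'' leaves the key step unsupported. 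Either cite Mulholland's theorem, as the paper does, or carry the computation through in full.

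A second point concerns (v): the statement as printed allows \emph{different} unitaries $U_1,\ldots,U_n$, while you prove only the common-unitary case. Your instinct is defensible, because for distinct $U_j$ the claim is in fact false: with $f(t)=t^2$ (so $\omega_f=\omega_e$), $T_1=T_2=\mathrm{diag}(1,0)$ on $\mathbb{C}^2$, $U_1=I$ and $U_2$ the coordinate swap, one gets $\omega_e(T_1,U_2^*T_2U_2)=1$ while $\omega_e(T_1,T_2)=\sqrt{2}$. The paper's own proof makes the substitution $y=U_jx$ as if a single unit vector $y$ could represent all the $U_jx$ simultaneously, which is only legitimate when the $U_j$ coincide; so your version is the correct one, but you should state explicitly that you are proving the common-unitary statement rather than the one printed. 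Finally, a small remark on (i): your argument needs $f$ to vanish only at $0$, which follows from injectivity of $f$ (implicit in the use of $f^{-1}$) together with $f(0)=0$; it is worth saying so, since ``increasing'' alone would not suffice.
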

\begin{proof}
The first, second, and fourth assertions immediately follow the definition of $\omega_f.$ For (iii), assume that $f$ is an increasing geometrically convex function. Then
\[\begin{aligned}
\sum\limits_{j=1}^{n}{f\left( \left| \left\langle \left( {T_j}+{T_{j}^{'}} \right)x,x \right\rangle \right| \right)}&=\sum\limits_{j=1}^{n}{f\left( \left| \left\langle {T_j}x,x \right\rangle +\left\langle {{T_{j}^{'}}}x,x \right\rangle \right| \right)} \\
& \le\sum_{j=1}^{n}{f\left( \left| \left\langle {T_j}x,x \right\rangle \right|+\left| \left\langle {{T_{j}^{'}}}x,x \right\rangle \right| \right)},
\end{aligned}\]
where we obtain the last inequality by the triangle inequality and the fact that $f$ is increasing. 
On the other hand, since $f$ is geometrically convex, it follows that  \cite[Corollary 1.1]{1}  
\[{{f}^{-1}}\left(\sum_{j=1}^{n}{f\left( \left| \left\langle {T_j}x,x \right\rangle \right|+\left| \left\langle {{T_{j}^{'}}}x,x \right\rangle \right| \right)} \right)\le {{f}^{-1}}\left(\sum_{j=1}^{n}{f\left( \left| \left\langle {T_j}x,x \right\rangle \right| \right)} \right)+{{f}^{-1}}\left(\sum_{j=1}^{n}{f\left( \left| \left\langle {{T_{j}^{'}}}x,x \right\rangle \right| \right)} \right),\]
which implies,
\[{{f}^{-1}}\left(\sum_{j=1}^{n}{f\left( \left| \left\langle \left( {T_j}+{{T_{i}^{'}}} \right)x,x \right\rangle \right| \right)} \right)\le {{f}^{-1}}\left(\sum_{j=1}^{n}{f\left( \left| \left\langle {T_j}x,x \right\rangle \right| \right)} \right)+{{f}^{-1}}\left(\sum_{j=1}^{n}{f\left( \left| \left\langle {{T_{j}^{'}}}x,x \right\rangle \right| \right)} \right).\]
Consequently,
\[{{\omega }_{f}}\left( {{T}_{1}}+{{T_{1}^{'}}},\ldots ,{{T}_{n}}+{{T_{n}^{'}}} \right)\le {{\omega }_{f}}\left( {{T}_{1}},\ldots ,{{T}_{n}} \right)+{{\omega }_{f}}\left( {{T_{1}^{'}}},\ldots ,{{T_{n}^{'}}} \right).\]
To prove (v), we have
\[\begin{aligned}
   {{\omega }_{f}}\left( U_{1}^{*}{{T}_{1}}{{U}_{1}},\ldots ,U_{n}^{*}{{T}_{n}}{{U}_{n}} \right)&=\underset{\left\| x \right\|=1}{\mathop{\sup }}\,{{f}^{-1}}\left( \sum\limits_{j=1}^{n}{f\left( \left| \left\langle U_{j}^{*}{{T}_{j}}{{U}_{j}}x,x \right\rangle  \right| \right)} \right) \\ 
 & =\underset{\left\| x \right\|=1}{\mathop{\sup }}\,{{f}^{-1}}\left( \sum\limits_{j=1}^{n}{f\left( \left| \left\langle {{T}_{j}}{{U}_{j}}x,{{U}_{j}}x \right\rangle  \right| \right)} \right) \\ 
 & =\underset{\left\| y \right\|=1}{\mathop{\sup }}\,{{f}^{-1}}\left( \sum\limits_{j=1}^{n}{f\left( \left| \left\langle {{T}_{j}}y,y \right\rangle  \right| \right)} \right) \\ 
 &= {{\omega }_{f}}\left( {{T}_{1}},\ldots ,{{T}_{n}} \right).  
\end{aligned}\]

Finally, for (vi), we note first that convexity of $f\circ{{g}^{-1}}$, together with the facts that $f(0)=g(0)=0$, implies
\[fo{{g}^{-1}}\left( a \right)+f\circ{{g}^{-1}}\left( b \right)\le f\circ{{g}^{-1}}\left( a+b \right); \; a,b\ge 0\]
thanks to Lemma \ref{superadditive}. Since ${{f}^{-1}}$ is an increasing function, then
	\[{{f}^{-1}}\left( f\circ{{g}^{-1}}\left( a \right)+f\circ{{g}^{-1}}\left( b \right) \right)\le {{g}^{-1}}\left( a+b \right).\]
Now, replacing $a$ and $b$ by $g\left( a \right)$ and $g\left( b \right)$, we get
	\[{{f}^{-1}}\left( f\left( a \right)+f\left( b \right) \right)\le {{g}^{-1}}\left( g\left( a \right)+g\left( b \right) \right).\]
The last inequality can be extended to $n$-tuple as follows
\[{{f}^{-1}}\left( \sum\limits_{j=1}^{n}{f\left( {{a}_{j}} \right)} \right)\le {{g}^{-1}}\left( \sum\limits_{j=1}^{n}{g\left( {{a}_{j}} \right)} \right); \;  a_j\ge 0.\]
Now, let $x\in\mathscr{H}$ be a unit vector. Replacing $a_j$ in the above inequality by $|\left<T_jx,x\right>|$, then taking the supremum implies 
\[{{\omega }_{f}}\left( {{T}_{1}},\ldots ,{{T}_{n}} \right)\le {{\omega }_{g}}\left( {{T}_{1}},\ldots ,{{T}_{n}} \right).\]
This completes the proof.
\end{proof}

Next, we attempt to find a relation between ${{\omega }_{f}}$ and $\omega $.

\begin{theorem}\label{6}
Let $T_1,\ldots,T_n\in \mathbb{B}(\mathscr{H})$ and let $f:[0,\infty)\to [0,\infty)$ be a continuous increasing convex  function with $f\left( 0 \right)=0$. Then
\begin{equation*}
{{\omega }_{f}}\left( {{T}_{1}},\ldots ,{{T}_{n}} \right)\le\sum_{j=1}^{n}{\omega \left( {T_j} \right)}.
\end{equation*}
\end{theorem}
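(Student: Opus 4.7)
The plan is to exploit superadditivity of $f$ (Lemma \ref{superadditive}) to push the finite sum inside the argument of $f$, and then cancel $f$ against $f^{-1}$.

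First I would fix a unit vector $x\in\mathscr H$ and write $a_j=|\langle T_j x,x\rangle|\ge 0$. Since $f$ is convex on $[0,\infty)$ with $f(0)=0$, Lemma \ref{superadditive} gives $f(a)+f(b)\le f(a+b)$ for $a,b\ge 0$, and an easy induction on $n$ extends this to
\[
\sum_{j=1}^{n} f(a_j)\le f\!\left(\sum_{j=1}^{n} a_j\right).
\]
Because $f$ is continuous and strictly increasing with $f(0)=0$, its inverse $f^{-1}$ is also increasing on the range of $f$, so applying $f^{-1}$ to both sides yields
\[
f^{-1}\!\left(\sum_{j=1}^{n} f(a_j)\right)\le \sum_{j=1}^{n} a_j=\sum_{j=1}^{n}\left|\langle T_j x,x\rangle\right|.
\]

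Next, each term $|\langle T_j x,x\rangle|$ is bounded by $\omega(T_j)$ by definition of the numerical radius, hence
\[
f^{-1}\!\left(\sum_{j=1}^{n} f(|\langle T_j x,x\rangle|)\right)\le \sum_{j=1}^{n}\omega(T_j).
\]
Taking the supremum over unit vectors $x$ on the left-hand side gives the claimed inequality. The only nontrivial ingredient is superadditivity, which is already isolated as Lemma \ref{superadditive}, so the argument is essentially immediate once that lemma is in hand; there is no real obstacle beyond making sure the induction from the two-variable superadditivity to the $n$-variable version is mentioned.
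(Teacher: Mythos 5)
Your proposal is correct and follows essentially the same route as the paper: superadditivity of the convex function $f$ with $f(0)=0$ (Lemma \ref{superadditive}, extended to $n$ terms) gives $\sum_{j}f(a_j)\le f\bigl(\sum_j a_j\bigr)$, then monotonicity of $f^{-1}$ and the bound $|\langle T_jx,x\rangle|\le\omega(T_j)$ yield the claim after taking the supremum. The paper's proof is identical in substance, so no further comment is needed.
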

\begin{proof}
Since $f$ is convex increasing, it follows that $f^{-1}$ is increasing and concave. By Lemma \ref{superadditive}, we have
	\begin{equation}\label{superadditive2}
f^{-1}(a+b)\le f^{-1}(a)+f^{-1}(b); \; a,b\ge 0.
	\end{equation}
Further, since $f$ is convex, superadditivity of $f$ implies 
\[\sum\limits_{j=1}^{n}{f\left( {{a}_{j}} \right)}\le f\left(\sum_{j=1}^{n}{{{a}_{j}}} \right)\]
for any ${{a}_{j}}\in J$. Monotony of $f^{-1}$ then implies 
\begin{equation}\label{1}
{{f}^{-1}}\left(\sum_{j=1}^{n}{f\left( {{a}_{j}} \right)} \right)\le\sum_{j=1}^{n}{{{a}_{j}}}.
\end{equation}
By replacing ${{a}_{i}}$ by $\left| \left\langle {T_j}x,x \right\rangle \right|$ in \eqref{1}, we obtain
\[{{f}^{-1}}\left(\sum_{j=1}^{n}{f\left( \left| \left\langle {T_j}x,x \right\rangle \right| \right)} \right)\le\sum_{j=1}^{n}{\left| \left\langle {T_j}x,x \right\rangle \right|},\]
for all unit vectors $x\in\mathscr{H}.$
Now, by taking supremum over unit vectors $x\in \mathscr{H}$, we get
\begin{equation}\label{s1}
{{\omega }_{f}}\left( {{T}_{1}},\ldots ,{{T}_{n}} \right)\le\sum_{j=1}^{n}{\omega \left( {T_j} \right)},
\end{equation}
as desired.
\end{proof}
\begin{remark}
For any $x\in \mathscr{H}$ with $\|x\|=1$, it holds
\begin{equation*}
|\left\langle {T_j}x,x \right\rangle|\le \omega \left( {T_j} \right).
\end{equation*}
If $f:[0,\infty)\to [0,\infty)$ is increasing, we get
\begin{equation*}
\sum\limits_{j=1}^{n}{f\left( \left| \left\langle {T_j}x,x \right\rangle \right| \right)}\le\sum_{j=1}^{n}{f\left( \omega \left( {T_j}\right)\right)}.
\end{equation*}
This implies
\begin{equation*}
	\omega_f\left( {{T}_{1}},\ldots ,{{T}_{n}} \right)=\sup_{\|x\|=1}f^{-1}\left( \sum_{j=1}^{n}f\left( \left| \left\langle {T_j}x,x \right\rangle \right| \right) \right) \le f^{-1}\left( \sum_{j=1}^{n}f\left( \omega \left( {T_j}\right)\right)\right).
	\end{equation*}
Now, if $f$ is convex (and increasing of course),  $f^{-1}$ is concave (and increasing), hence $f^{-1}$ is subadditive. That is
\begin{equation*}
f^{-1}\left( \sum_{j=1}^{n}f\left( \omega \left( {T_j}\right)\right)\right)\le \sum_{j=1}^{n} f^{-1}\left(f\left( \omega \left( {T_j}\right)\right)\right)=\sum\limits_{j=1}^{n}{\omega \left( {T_j} \right)}.
\end{equation*}
Thus, we have shown that if $T_j\in\mathbb{B}(\mathscr{H})$ and $f:[0,\infty)\to [0,\infty)$ is a continuous increasing convex function  then
\begin{equation}\label{eq_w_f_1}
\omega_f(T_1,\ldots,T_n)\le f^{-1}\left( \sum_{j=1}^{n}f\left( \omega \left( {T_j}\right)\right)\right)\le \sum_{j=1}^{n} f^{-1}\left(f\left( \omega \left( {T_j}\right)\right)\right)=\sum\limits_{j=1}^{n}{\omega \left( {T_j} \right)}.
\end{equation}
This indeed provides a considerable refinement of \eqref{s1}. We notice that the condition $f(0)$ is unnecessary here.
\end{remark}

In the following theorem, we present the $\omega_f$ version of the first inequality in \eqref{eq_eq_w_norm}. We notice that \eqref{s1} provides the $\omega_f$ version of the second inequality in \eqref{eq_eq_w_norm} because $\omega(T_j)\le \|T_j\|.$ In fact, \eqref{eq_w_f_1} provides further details than \eqref{s1}. However, we need to be cautious here as \eqref{eq_w_f_1} is valid for convex functions, while the next is for concave functions.

\begin{theorem}\label{thm_w_f_norm}
Let $T_1,\ldots,T_n\in \mathbb{B}(\mathscr{H})$ and let $f:[0,\infty)\to [0,\infty)$ be a continuous increasing concave function with  $f\left( 0 \right)=0$. Then
\begin{equation*}
\frac{1}{2}\left\|\sum_{j=1}^{n}{{T_j}} \right\|\le \omega\left(\sum_{j=1}^{n}T_j\right)\le {{\omega }_{f}}\left( {{T}_{1}},\ldots ,{{T}_{n}} \right).
\end{equation*}
\end{theorem}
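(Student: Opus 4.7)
The plan is to handle the two inequalities separately, with the left one being an immediate application of a known fact and the right one being the genuine content that exploits concavity of $f$.

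For the left inequality $\tfrac{1}{2}\|\sum_j T_j\|\le \omega(\sum_j T_j)$, I would simply invoke the standard inequality \eqref{eq_eq_w_norm} applied to the single operator $T=\sum_{j=1}^n T_j\in\mathbb{B}(\mathscr{H})$. Nothing about $f$ enters here.

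For the right inequality $\omega(\sum_j T_j)\le \omega_f(T_1,\ldots,T_n)$, the idea is to push the absolute value of $\langle(\sum_j T_j)x,x\rangle$ through $f$, distribute via subadditivity (the concave counterpart of Lemma \ref{superadditive}), and then invert. Concretely, for any unit vector $x\in\mathscr{H}$, the triangle inequality gives
\[
\Bigl|\Bigl\langle \sum_{j=1}^n T_j x,x\Bigr\rangle\Bigr|\le \sum_{j=1}^n |\langle T_j x,x\rangle|.
\]
Since $f$ is increasing, applying $f$ to both sides preserves the inequality. Now Lemma \ref{superadditive} in its reversed form states that a concave $f:[0,\infty)\to[0,\infty)$ is subadditive, so by induction
\[
f\Bigl(\sum_{j=1}^n |\langle T_j x,x\rangle|\Bigr)\le \sum_{j=1}^n f\bigl(|\langle T_j x,x\rangle|\bigr).
\]
Applying the increasing function $f^{-1}$ to the composed inequality yields
\[
\Bigl|\Bigl\langle \sum_{j=1}^n T_j x,x\Bigr\rangle\Bigr|\le f^{-1}\Bigl(\sum_{j=1}^n f\bigl(|\langle T_j x,x\rangle|\bigr)\Bigr).
\]
Taking the supremum over unit vectors $x$ on both sides produces $\omega(\sum_j T_j)\le \omega_f(T_1,\ldots,T_n)$.

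The only subtle point is making sure Lemma \ref{superadditive} extends from two summands to $n$, which is a routine induction since subadditivity is preserved under pairing, and checking that the lemma's concave case really requires no constraint on $f(0)$ (the hypothesis $f(0)=0$ in our statement is only used to ensure $f^{-1}$ is defined and well-behaved near $0$, not for subadditivity itself). No serious obstacle is anticipated; the proof is essentially a two-line chain of monotone and sub/super-additive steps sandwiched between the triangle inequality and the supremum.
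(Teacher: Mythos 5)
Your proof is correct and takes essentially the same route as the paper: triangle inequality, Lemma \ref{superadditive}, monotonicity of $f^{-1}$, supremum over unit vectors, and \eqref{eq_eq_w_norm} for the left inequality. The only cosmetic difference is that you invoke the subadditivity (concave case) of Lemma \ref{superadditive} for $f$ itself, whereas the paper applies the superadditivity (convex case) to the convex inverse $f^{-1}$ using $f(0)=0$; the two versions of this step are equivalent.
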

\begin{proof}
Let $x\in\mathscr{H}$ be a unit vector. Since $f$ is concave with $f(0)=0$, $f^{-1}$ is convex with $f^{-1}(0)=0.$ Applying Lemma \ref{superadditive}, we have
\begin{align*}
\omega_f(T_1,\cdots,T_n)&\ge f^{-1}\left(\sum_{j=1}^{n}f(|\left<T_jx,x\right>|)\right)\\
&\ge \sum_{j=1}^{n}|\left<T_jx,x\right>|\\
&\ge \left| \sum_{j=1}^{n}\left<T_jx,x\right>\right|\\
&=\left|\left<\left(\sum_{j=1}^{n}T_j\right)x,x\right>\right|.
\end{align*}
Taking the supremum over unit vectors $x\in\mathscr{H}$, we obtain $\omega_f(T_1,\cdots,T_n)\ge \omega\left(\sum_{j=1}^{n}T_j\right)$. The result follows immediately from \eqref{eq_eq_w_norm}.
\end{proof}

The following result is concerned with some lower bounds for $\omega_{f}(\cdot).$

	\begin{proposition}
	Let $T_1,\ldots,T_n\in \mathbb{B}(\mathscr{H})$ and let $f:[0,\infty)\to [0,\infty)$ be an increasing convex   function. Then
	\begin{equation}\label{lowerbound2}
{{\omega }_{f}}\left( {{T}_{1}},\ldots ,{{T}_{n}}\right) \ge \sup_{|\lambda_j|\le 1}\omega\left( \sum_{j=1}^n \frac{\lambda_j}{n}T_j\right)\ge \frac{1}{2}\sup_{|\lambda_j|\le 1}\left\| \sum_{j=1}^n \frac{\lambda_j}{n}T_j\right\|.
	\end{equation}
	\end{proposition}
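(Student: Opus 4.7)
The plan is to handle the two inequalities in \eqref{lowerbound2} separately. The right-hand one is immediate: for any fixed scalars $|\lambda_j|\le 1$, set $S:=\sum_{j=1}^n (\lambda_j/n) T_j$; then $\omega(S)\ge \tfrac{1}{2}\|S\|$ by \eqref{eq_eq_w_norm}, and taking the supremum over $|\lambda_j|\le 1$ on both sides yields the second inequality at once. So the real content is to prove $\omega_f(T_1,\ldots,T_n)\ge \omega\!\left(\sum_{j=1}^n (\lambda_j/n)\,T_j\right)$ for every tuple $(\lambda_j)$ with $|\lambda_j|\le 1$.

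For this I fix a unit vector $x\in\mathscr{H}$ and any such $(\lambda_j)$, and use the triangle inequality together with $|\lambda_j|\le 1$ to write
\[
\left|\left\langle\sum_{j=1}^n \frac{\lambda_j}{n} T_j\, x,x\right\rangle\right|
=\frac{1}{n}\left|\sum_{j=1}^n \lambda_j \langle T_j x,x\rangle\right|
\le \frac{1}{n}\sum_{j=1}^n |\langle T_j x,x\rangle|.
\]
The remaining task is to show that the right-hand side is dominated by $f^{-1}\!\left(\sum_{j=1}^n f(|\langle T_j x,x\rangle|)\right)$; once this is established, taking the supremum over unit $x$ produces $\omega_f(T_1,\ldots,T_n)\ge \omega\!\left(\sum_{j=1}^n (\lambda_j/n)\,T_j\right)$, and a final supremum over $(\lambda_j)$ finishes the left inequality.

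The elementary estimate I isolate is this: for any $a_1,\ldots,a_n\ge 0$,
\[
f^{-1}\!\left(\sum_{j=1}^n f(a_j)\right)\ge \frac{1}{n}\sum_{j=1}^n a_j.
\]
The verification is short and does not even use convexity, only that $f$ is non-negative and increasing (so $f^{-1}$ is increasing): for each fixed index $k$ one has $\sum_j f(a_j)\ge f(a_k)$, hence $f^{-1}\!\left(\sum_j f(a_j)\right)\ge a_k$; maximising over $k$ yields $f^{-1}\!\left(\sum_j f(a_j)\right)\ge \max_k a_k\ge \frac{1}{n}\sum_{k=1}^n a_k$. Applying this with $a_j=|\langle T_j x,x\rangle|$ closes the gap.

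I do not foresee a genuine obstacle: the two reductions are routine and the pointwise estimate above is a one-liner. The only point worth flagging is that the convexity hypothesis in the proposition is not actually invoked — monotonicity and non-negativity of $f$ already suffice. A sharper bound that really exploits convexity (for example replacing the factor $1/n$ by a power-mean constant such as $n^{1/q-1}$ when $f(t)=t^{q}$) would be a natural direction for strengthening \eqref{lowerbound2}, but that goes beyond what the statement requires.
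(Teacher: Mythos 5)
Your proof is correct and has the same skeleton as the paper's: the triangle inequality gives $\bigl|\langle \sum_{j}(\lambda_j/n)T_j x,x\rangle\bigr|\le \frac{1}{n}\sum_{j}|\langle T_j x,x\rangle|$, the scalar estimate $f^{-1}\bigl(\sum_{j}f(a_j)\bigr)\ge \frac{1}{n}\sum_{j}a_j$ is applied with $a_j=|\langle T_jx,x\rangle|$, and the two suprema plus the norm--numerical-radius equivalence \eqref{eq_eq_w_norm} finish the argument. The only divergence is in how that scalar estimate is justified: the paper invokes convexity of $f$ (in effect Jensen, $f\bigl(\frac{1}{n}\sum_j a_j\bigr)\le \frac{1}{n}\sum_j f(a_j)\le \sum_j f(a_j)$, followed by $f^{-1}$), whereas you prove the stronger pointwise bound $f^{-1}\bigl(\sum_j f(a_j)\bigr)\ge \max_k a_k\ge \frac{1}{n}\sum_k a_k$ using only nonnegativity and monotonicity of $f$. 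Your observation that convexity is not needed is a legitimate (mild) generalization of the proposition as stated. One small caveat to your remark: applying $f^{-1}$ to $\sum_j f(a_j)$ requires that this sum lie in the range of $f$; for a nonconstant increasing convex $f$ this is automatic because such a function is unbounded, while for a merely increasing bounded $f$ the quantity $\omega_f$ itself can fail to be defined. Since the paper already glosses over this in Definition \ref{2}, this does not affect the correctness of your argument relative to the paper's standing conventions, but it is the one place where convexity quietly earns its keep.
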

	\begin{proof}
	By  convexity of $f$ we have, for any $\lambda_j \in \mathbb{C}$ with $|\lambda_j|\le 1$ and any unit vector $x\in\mathscr{H},$
		\begin{align*}
	f^{-1}\left(\displaystyle\sum_{j=1}^nf\left(|\big\langle T_jx, x\big\rangle|\right)\right)
	&\ge \displaystyle\sum_{j=1}^n \frac{1}{n}|\big\langle T_jx, x\big\rangle|\nonumber\\
	&\ge \left|\sum_{j=1}^n \Big\langle \frac{\lambda_j}{n}T_jx, x\Big\rangle \right|\\
	&=\left| \Big\langle \sum_{j=1}^n \frac{\lambda_j}{n} T_jx, x\Big\rangle \right|.\nonumber \
	\end{align*}
	Taking the supremum over $x\in \mathscr{H}$ with $\|x\|=1$ yields 
	\begin{equation*}
{{\omega }_{f}}\left( {{T}_{1}},\ldots ,{{T}_{n}}\right)\ge \omega\left( \sum_{j=1}^n \frac{\lambda_j}{n} T_j\right),
	\end{equation*}
	for any $\lambda=(\lambda_1, \ldots, \lambda_n)$ with $|\lambda_j|\le 1$. Therefore,
	\begin{equation*}
	{{\omega }_{f}}\left( {{T}_{1}},\ldots ,{{T}_{n}}\right)\ge \sup_{|\lambda_j|\le 1}\omega\left( \sum_{j=1}^n \frac{\lambda_j}{n} T_j\right).
	\end{equation*}
 The second inequality follows quickly from \eqref{eq_eq_w_norm}.
	\end{proof}

On making use of inequality \eqref{lowerbound2}, we find different lower bounds for $ {{\omega }_{f}}$.

\begin{corollary}\label{3}
 Let $T_1,\ldots,T_n\in \mathbb{B}(\mathscr{H})$ and let $f:[0,\infty)\to [0,\infty)$ be an increasing convex function. Then
	\begin{equation*}
	{{\omega }_{f}}\left( {{T}_{1}},\ldots ,{{T}_{n}}\right) \ge \frac 1n\max\{\omega(T_1),\ldots, \omega(T_n)\}\ge \frac {1}{2n}\max\{\|T_1\|,\ldots, \|T_n\|\}.
	\end{equation*}
	\end{corollary}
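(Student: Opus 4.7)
The plan is to obtain this corollary as a direct specialization of the previous proposition, so no genuine new work is required: the game is just to choose $\lambda=(\lambda_1,\dots,\lambda_n)$ cleverly in \eqref{lowerbound2}.

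First I would exploit the first inequality of \eqref{lowerbound2}. For each fixed index $k\in\{1,\dots,n\}$, take $\lambda_k=1$ and $\lambda_j=0$ for $j\ne k$. Then
\[
\sum_{j=1}^n \frac{\lambda_j}{n}T_j \;=\; \frac{1}{n}T_k,
\]
so that \eqref{lowerbound2} combined with absolute homogeneity of $\omega$ gives
\[
{{\omega }_{f}}\left( {{T}_{1}},\ldots ,{{T}_{n}}\right)\;\ge\;\omega\!\left(\tfrac{1}{n}T_k\right)\;=\;\tfrac{1}{n}\,\omega(T_k).
\]
Since $k$ was arbitrary, taking the maximum over $k\in\{1,\dots,n\}$ yields the first claimed inequality
\[
{{\omega }_{f}}\left( {{T}_{1}},\ldots ,{{T}_{n}}\right)\;\ge\;\tfrac{1}{n}\max\{\omega(T_1),\dots,\omega(T_n)\}.
\]

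For the second inequality I would simply appeal to the left inequality in \eqref{eq_eq_w_norm}, which says $\omega(T)\ge \tfrac{1}{2}\|T\|$ for every $T\in\mathbb{B}(\mathscr{H})$. Applying this to each $T_k$ gives $\tfrac{1}{n}\omega(T_k)\ge \tfrac{1}{2n}\|T_k\|$, and taking the maximum over $k$ finishes the chain. Alternatively, one can read this off from the second inequality in \eqref{lowerbound2} using the same choice of $\lambda$. There is essentially no obstacle here; the only point worth being careful about is that the convexity hypothesis on $f$ is needed solely to invoke \eqref{lowerbound2}, and no additional assumption on $f$ is required to pass from the first inequality to the second.
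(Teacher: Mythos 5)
Your proof is correct and follows essentially the same route as the paper: both specialize \eqref{lowerbound2} to the tuples $\lambda$ with a single entry equal to $1$ and the rest $0$, obtaining $\omega_f(T_1,\ldots,T_n)\ge \tfrac{1}{n}\omega(T_k)\ge \tfrac{1}{2n}\|T_k\|$ for each $k$ and then taking the maximum. Your passing remark that the last step can equally be seen via the left inequality of \eqref{eq_eq_w_norm} is just a restatement of what the second inequality in \eqref{lowerbound2} already encodes, so there is no substantive difference.
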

\begin{proof} 
For any $j\in \{1, \ldots, n\}$, we consider $\lambda=(\lambda_1, \ldots, \lambda_n) \in \mathbb{C}^n$ such that $\lambda_i=1$ and $\lambda_j=0$ if $j\neq i$. Then, by \eqref{lowerbound2}, we have
	\[{{\omega }_{f}}\left( {{T}_{1}},\ldots ,{{T}_{n}}\right) \ge \frac 1n\omega(T_j)\ge \frac{1}{2n}\|T_j\|,\]
	for any $1\le j\le n$, and this completes the proof. 
\end{proof}

\begin{corollary}\label{4}
	Let $T_1,\ldots,T_n\in \mathbb{B}(\mathscr{H})$ and let $f:[0,\infty)\to[0,\infty)$ be an increasing convex  function. Then
	\begin{equation*}
	{{\omega }_{f}}\left( {{T}_{1}},\ldots ,{{T}_{n}}\right) \ge \frac 1n \max\left\{\omega\left( \sum_{j=1}^n \pm T_j\right)\right\}\ge \frac{1}{2n} \max\left\{\left\| \sum_{j=1}^n \pm T_j\right\|\right\}.
	\end{equation*}
\end{corollary}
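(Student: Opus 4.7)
The strategy is to invoke the proposition immediately preceding the corollary, specifically the lower bound
$$\omega_f(T_1,\ldots,T_n) \ge \sup_{|\lambda_j|\le 1}\omega\left(\sum_{j=1}^n \frac{\lambda_j}{n}T_j\right),$$
and then choose the free parameters $\lambda_j$ suitably. Since the supremum runs over all tuples with $|\lambda_j|\le 1$, the choices $\lambda_j\in\{+1,-1\}$ are all admissible, and each produces a valid lower bound.

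The plan is then to exploit the absolute homogeneity of the numerical radius. For any fixed choice of signs $\varepsilon_j\in\{\pm1\}$, setting $\lambda_j=\varepsilon_j$ and pulling the scalar $\tfrac{1}{n}$ out of $\omega(\cdot)$ gives
$$\omega_f(T_1,\ldots,T_n)\ge \omega\left(\sum_{j=1}^n \frac{\varepsilon_j}{n}T_j\right)=\frac{1}{n}\,\omega\left(\sum_{j=1}^n \varepsilon_j T_j\right).$$
Taking the maximum of the right-hand side over all $2^n$ sign patterns yields the first inequality of the corollary.

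For the second inequality, the plan is just to apply the universal estimate $\omega(A)\ge \tfrac{1}{2}\|A\|$ from \eqref{eq_eq_w_norm} inside the max, replacing each $\omega(\sum \pm T_j)$ by $\tfrac{1}{2}\|\sum \pm T_j\|$. The factor of $\tfrac{1}{2n}$ then appears in the obvious way. I do not anticipate a real obstacle; the whole corollary is essentially a specialization of the supremum in \eqref{lowerbound2} to sign vectors combined with the elementary operator norm/numerical radius comparison, so the proof should be short and purely mechanical.
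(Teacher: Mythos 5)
Your proof is correct and follows essentially the same route as the paper: specializing the supremum in \eqref{lowerbound2} to sign vectors $\lambda_j=\pm 1$, pulling out the factor $\frac{1}{n}$ by homogeneity of $\omega$, and then applying the bound $\omega(A)\ge \frac{1}{2}\|A\|$ from \eqref{eq_eq_w_norm}. Nothing further is needed.
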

\begin{proof}
	It is a simple consequence of \eqref{lowerbound2} where we consider $\lambda_j=\pm 1$ for  $1\le j\le n.$
\end{proof}

In the previous statement we can consider $\lambda_j=e^{\mathrm i\theta}$ with $\theta\in [0, 2\pi].$

\begin{remark}
From Corollary \ref{4}, we get
	\[{{\omega }_{f}}\left( {{T}_{1}},{{T}_{2}} \right)\ge \frac{1}{2}\omega \left( {{T}_{1}}+{{T}_{2}} \right).\]
Let $T=B+\mathrm iC$ be the Cartesian decomposition of the operator $T\in \mathbb B\left( \mathscr H \right)$. Setting ${{T}_{1}}=B$ and ${{T}_{2}}=\mathrm iC$, we infer that
	\[{{\omega }_{f}}\left( B,C \right)={{\omega }_{f}}\left( B,\mathrm iC \right)\ge \frac{1}{2}\omega \left( B+\mathrm iC \right)=\frac{1}{2}\omega \left( T \right).\]
\end{remark}

\begin{remark}
Letting ${{T}_{1}}={{T}_{2}}=\cdots ={{T}_{n}}=T$. From Theorem \ref{6}, we get
\begin{equation}\label{7}
{{\omega }_{f}}\left( T,\ldots ,T \right)\le n~\omega \left( T \right).
\end{equation}
On the other hand, by Corollary \ref{4}, we infer that
\begin{equation}\label{8}
{{\omega }_{f}}\left( T,\ldots ,T \right)\ge \omega \left( T \right).
\end{equation}
Combining two inequalities \eqref{7} and \eqref{8}, we reach to
	\[\omega \left( T \right)\le {{\omega }_{f}}\left( T,\ldots ,T \right)\le n~\omega \left( T \right).\]
\end{remark}

In the following, we present a lower bound for the generalized Davis-Wielandt radius introduced in the introduction.
\begin{corollary}\label{5}
Let $T \in \mathbb B(\mathscr H)$ and let $f:[0,\infty)\to [0,\infty)$ be a continuous increasing concave function with  $f\left( 0 \right)=0$. Then
\[\left\| \mathfrak RT+{{T}^{*}}T \right\|+\frac{\left| \omega \left( T+{{T}^{*}}T \right)-\omega \left( {{T}^{*}}+{{T}^{*}}T \right) \right|}{2}\le {{\omega }_{f}}\left( T,{{T}^{*}}T \right).\]
\end{corollary}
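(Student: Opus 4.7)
The plan is to bound $\omega_f(T,T^*T)$ below by both $\omega(T+T^*T)$ and $\omega(T^*+T^*T)$ separately, then combine these two bounds through the identity $\max\{a,b\}=\tfrac{a+b}{2}+\tfrac{|a-b|}{2}$. The hypothesis on $f$ (concave, increasing, $f(0)=0$) is exactly what Theorem \ref{thm_w_f_norm} requires, so the two preliminary bounds fall out immediately.

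First, I would apply Theorem \ref{thm_w_f_norm} to the pair $(T_1,T_2)=(T,T^*T)$ to obtain $\omega_f(T,T^*T)\ge \omega(T+T^*T)$. For the companion inequality, property (iv) of the preceding proposition, together with self-adjointness of $T^*T$, gives $\omega_f(T,T^*T)=\omega_f(T^*,T^*T)$, and Theorem \ref{thm_w_f_norm} applied to $(T^*,T^*T)$ then yields $\omega_f(T,T^*T)\ge \omega(T^*+T^*T)$. Taking the maximum and using
\[
\max\{a,b\}=\frac{a+b}{2}+\frac{|a-b|}{2}
\]
delivers
\[
\omega_f(T,T^*T)\ \ge\ \frac{\omega(T+T^*T)+\omega(T^*+T^*T)}{2}+\frac{|\omega(T+T^*T)-\omega(T^*+T^*T)|}{2}.
\]

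The remaining step is to show that the averaged term dominates $\|\mathfrak{R}T+T^*T\|$. Since $T^*T$ is self-adjoint and $\mathfrak{R}T=\mathfrak{R}T^*$, we have
\[
\mathfrak{R}(T+T^*T)=\mathfrak{R}(T^*+T^*T)=\mathfrak{R}T+T^*T.
\]
Specializing Lemma \ref{lemma_theta} with $\theta=0$ gives $\omega(A)\ge \|\mathfrak{R}A\|$ for any $A\in\mathbb{B}(\mathscr H)$, so both $\omega(T+T^*T)$ and $\omega(T^*+T^*T)$ are at least $\|\mathfrak{R}T+T^*T\|$; in particular their average is too. Plugging this into the displayed inequality above yields the claim.

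There is no real obstacle here beyond bookkeeping; the proof is essentially an assembly of Theorem \ref{thm_w_f_norm}, property (iv) of the proposition, the adjoint symmetry $\mathfrak{R}T=\mathfrak{R}T^*$, and the $\max$ identity. The only point to watch is that we never use convexity of $f$, so the concavity hypothesis inherited from Theorem \ref{thm_w_f_norm} is indeed the operative one in the statement.
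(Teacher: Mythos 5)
Your proof is correct and takes essentially the same route as the paper: both lower-bound $\omega_f(T,T^*T)$ by $\omega(T+T^*T)$ and $\omega(T^*+T^*T)$ via Theorem~\ref{thm_w_f_norm} (you pass through property (iv), the paper through $\omega(X)=\omega(X^*)$), and then rewrite the maximum as average plus half the difference before dominating $\left\| \mathfrak{R}T+T^*T\right\|$. The only cosmetic difference is in the last step, where you invoke $\omega(A)\ge\left\|\mathfrak{R}A\right\|$ from Lemma~\ref{lemma_theta}, while the paper uses $\left\|\mathfrak{R}T+T^*T\right\|=\omega(\mathfrak{R}T+T^*T)$ for the self-adjoint operator $\mathfrak{R}T+T^*T$ together with subadditivity of $\omega$; both close the argument equally well.
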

\begin{proof}
From Theorem \ref{thm_w_f_norm}, we have
\[\omega \left( T+{{T}^{*}}T \right)\le {{\omega }_{f}}\left( T,{{T}^{*}}T \right).\]
Since $\omega \left( X \right)=\omega \left( {{X}^{*}} \right)$ for any $X \in \mathbb B(\mathscr H)$, we get
\[\omega \left( {{T}^{*}}+{{T}^{*}}T \right)\le {{\omega }_{f}}\left( T,{{T}^{*}}T \right).\]
Thus,
\[\begin{aligned}
  & \left\| \mathfrak RT+{{T}^{*}}T \right\|+\frac{\left| \omega \left( T+{{T}^{*}}T \right)-\omega \left( {{T}^{*}}+{{T}^{*}}T \right) \right|}{2} \\ 
 & =\omega \left( \mathfrak RT+{{T}^{*}}T \right)+\frac{\left| \omega \left( T+{{T}^{*}}T \right)-\omega \left( {{T}^{*}}+{{T}^{*}}T \right) \right|}{2} \\ 
 & \le \frac{\omega \left( T+{{T}^{*}}T \right),\omega \left( {{T}^{*}}+{{T}^{*}}T \right)}{2}+\frac{\left| \omega \left( T+{{T}^{*}}T \right)-\omega \left( {{T}^{*}}+{{T}^{*}}T \right) \right|}{2} \\ 
 & =\max \left\{ \omega \left( T+{{T}^{*}}T \right),\omega \left( {{T}^{*}}+{{T}^{*}}T \right) \right\} \\ 
 & \le {{\omega }_{f}}\left( T,{{T}^{*}}T \right), 
\end{aligned}\]
as desired.
\end{proof}

We notice that Corollary \ref{5} provides some possible relation between $\omega_f(T,T^*T)$ and $\|\mathfrak{R}T+T^*T\|$ when $f$ is a concave function. In contrast, the following corollary presents a possible relation between these quantities when $f$ is convex.

\begin{corollary}
	Let $T\in \mathbb{B}\left( \mathscr{H} \right)$ and let $f:[0,\infty)\to[0,\infty)$ be an increasing convex  function. Then
	\[\frac{1}{2}\max \left\{ \omega \left( T \right),{{\left\| T \right\|}^{2}} \right\}\le {{\omega }_{f}}\left( T,{{T}^{*}}T \right),\]
and
\[\frac{1}{2}\left\| \mathfrak RT+{{T}^{*}}T \right\|+\frac{\left| \omega \left( T+{{T}^{*}}T \right)-\omega \left( {{T}^{*}}+{{T}^{*}}T \right) \right|}{4}\le {{\omega }_{f}}\left( T,{{T}^{*}}T \right).\]
\end{corollary}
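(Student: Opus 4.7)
The plan is to derive both inequalities as direct specializations of Corollaries \ref{3} and \ref{4} to $n = 2$ with $(T_1, T_2) = (T, T^*T)$, combined with two standard facts: since $T^*T \ge 0$ is self-adjoint, $\omega(T^*T) = \|T^*T\| = \|T\|^2$; and since $\mathfrak{R}T + T^*T$ is self-adjoint, $\omega(\mathfrak{R}T + T^*T) = \|\mathfrak{R}T + T^*T\|$. The convexity hypothesis on $f$ is precisely what both corollaries require.

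For the first inequality, Corollary \ref{3} with $T_1 = T$ and $T_2 = T^*T$ gives $\omega_f(T, T^*T) \ge \tfrac{1}{2}\max\{\omega(T), \omega(T^*T)\}$, and substituting $\omega(T^*T) = \|T\|^2$ closes the argument.

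For the second inequality, I would first apply Corollary \ref{4} to $(T, T^*T)$ to obtain $\omega_f(T, T^*T) \ge \tfrac{1}{2}\omega(T + T^*T)$. The key additional step is to invoke property (iv) of the earlier proposition, giving $\omega_f(T, T^*T) = \omega_f(T^*, T^*T)$, and then apply Corollary \ref{4} once more to the pair $(T^*, T^*T)$; this yields $\omega_f(T, T^*T) \ge \tfrac{1}{2}\omega(T^* + T^*T)$. Such a bound cannot be reached directly from $(T, T^*T)$, since the scalar multipliers $\lambda_j$ in \eqref{lowerbound2} cannot convert $T$ into $T^*$. Combining the two lower bounds yields
\[\omega_f(T, T^*T) \ge \tfrac{1}{2}\max\{\omega(T + T^*T),\, \omega(T^* + T^*T)\}.\]

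To conclude, I would set $a := \omega(T + T^*T)$ and $b := \omega(T^* + T^*T)$, use the elementary identity $\max\{a, b\} = \tfrac{a+b}{2} + \tfrac{|a-b|}{2}$, and apply subadditivity of the norm $\omega$:
\[a + b \ge \omega\bigl((T + T^*T) + (T^* + T^*T)\bigr) = \omega(2\mathfrak{R}T + 2T^*T) = 2\|\mathfrak{R}T + T^*T\|,\]
where the last equality uses self-adjointness. Dividing by $4$ and combining with the $\tfrac{1}{4}|a-b|$ contribution produces the stated inequality. The only step requiring any insight is invoking property (iv) to bring $T^* + T^*T$ into the picture; the remainder of the argument is routine bookkeeping.
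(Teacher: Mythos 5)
Your proof is correct, and it follows the paper's strategy almost exactly: the first inequality is Corollary \ref{3} applied to $(T,T^*T)$ together with $\omega(T^*T)=\|T\|^2$, and the second combines the lower bounds $\tfrac12\omega(T+T^*T)$ and $\tfrac12\omega(T^*+T^*T)$ via the identity $\max\{a,b\}=\tfrac{a+b}{2}+\tfrac{|a-b|}{2}$, subadditivity of $\omega$, and $\omega(\mathfrak RT+T^*T)=\|\mathfrak RT+T^*T\|$, which is precisely the paper's route (Corollary \ref{4} plus the argument of Corollary \ref{5}). The only divergence is how you obtain the bound involving $T^*+T^*T$: you invoke property (iv) of the Proposition ($\omega_f$ is invariant under taking adjoints) and apply Corollary \ref{4} a second time to the pair $(T^*,T^*T)$, whereas the paper simply uses $\omega(X)=\omega(X^*)$ with $X=T+T^*T$, noting that $(T+T^*T)^*=T^*+T^*T$. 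Your step is valid, but your side remark that such a bound ``cannot be reached directly from $(T,T^*T)$'' is inaccurate: since $\omega(T^*+T^*T)=\omega\bigl((T+T^*T)^*\bigr)=\omega(T+T^*T)$, the bound already obtained from $(T,T^*T)$ is literally the same number (which also shows the absolute-difference term in the statement is identically zero). So no extra idea is needed there; both versions of the step are one line, and the rest of your bookkeeping matches the paper.
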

\begin{proof}
Employing Corollary \ref{3}, gives
\[\begin{aligned}
   {{\omega }_{f}}\left( T,{{T}^{*}}T \right)&\ge \frac{1}{2}\max \left\{ \omega \left( T \right),\omega \left( {{T}^{*}}T \right) \right\} \\ 
 & =\frac{1}{2}\max \left\{ \omega \left( T \right),\left\| {{T}^{*}}T \right\| \right\} \\ 
 & =\frac{1}{2}\max \left\{ \omega \left( T \right),{{\left\| T \right\|}^{2}} \right\}.  
\end{aligned}\]
This proves the first inequality. To establish the second inequality, by Corollary \ref{4}, we have
\[{{\omega }_{f}}\left( T,{{T}^{*}}T \right)\ge \frac{1}{2}\omega \left( T+{{T}^{*}}T \right).\]
Applying the same arguments as in the proof of Corollary \ref{5} indicates the expected result.
\end{proof}

\section{More elaborated relations with the numerical radius }
In 1994, Furuta \cite{Furuta} proved an attractive generalization of Kato's (Cauchy--Schwarz) inequality, for an arbitrary $T\in\mathbb{B}(\mathscr{H})$, as follows
\begin{align}
	\left| {\left\langle {T\left| T \right|^{\alpha + \beta - 1} x,y} \right\rangle } \right|^2 \le \left\langle {\left| T \right|^{2\alpha } x,x} \right\rangle \left\langle {\left| T^* \right|^{2\beta } y,y} \right\rangle \label{eq3.3}
\end{align}
for any $x, y \in \mathscr{H}$ and $\alpha,\beta\in \left[0,1\right]$ with $\alpha+\beta \ge1$.

In the following result, we present an upper bound of $\omega_f$ for operators of the form $T|T|^{\alpha+\beta-1}$ appearing in \eqref{eq3.3}.
\begin{theorem}
Let $T_1,\ldots,T_n\in \mathbb{B}(\mathscr{H})$ and let $f:[0,\infty)\to [0,\infty)$ be an increasing continuous  geometrically convex function. If $p,q>1$ are such that $\frac{1}{p}+\frac{1}{q}=1$, then
\[{{\omega }_{f}}\left( {{T}_{1}}{{\left| {{T}_{1}} \right|}^{\alpha +\beta -1}},\ldots ,{{T}_{n}}{{\left| {{T}_{n}} \right|}^{\alpha +\beta -1}} \right)\le \left\| {{f}^{-1}}\left( \sum\limits_{j=1}^{n}{\left( \frac{1}{p}{{f}^{\frac{p}{2}}}\left( {{\left| {{T}_{j}} \right|}^{2\alpha }} \right)+\frac{1}{q}{{f}^{\frac{q}{2}}}\left( {{\left| T_{j}^{*} \right|}^{2\beta }} \right) \right)} \right) \right\|,\]
for any $\alpha,\beta\in \left[0,1\right]$ with $\alpha+\beta \ge1$.
\end{theorem}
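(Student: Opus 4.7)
The plan is to combine Furuta's Cauchy--Schwarz inequality \eqref{eq3.3} with the geometric convexity of $f$, the scalar Young inequality, and the operator Jensen inequality (Lemma \ref{lem_conv_inner}). First, fix a unit vector $x\in\mathscr{H}$; applying \eqref{eq3.3} with $y=x$ to each $T_j$ and taking square roots yields
\[|\langle T_j|T_j|^{\alpha+\beta-1}x,x\rangle|\le \sqrt{\langle|T_j|^{2\alpha}x,x\rangle\,\langle|T_j^*|^{2\beta}x,x\rangle}.\]
Passing through the increasing function $f$ and invoking $f(\sqrt{ab})\le\sqrt{f(a)f(b)}$ then gives
\[f(|\langle T_j|T_j|^{\alpha+\beta-1}x,x\rangle|)\le \sqrt{f(\langle|T_j|^{2\alpha}x,x\rangle)\,f(\langle|T_j^*|^{2\beta}x,x\rangle)}.\]

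Next, I apply Young's inequality $uv\le u^{p}/p+v^{q}/q$ with $u=f^{1/2}(\langle|T_j|^{2\alpha}x,x\rangle)$ and $v=f^{1/2}(\langle|T_j^*|^{2\beta}x,x\rangle)$, and then push the resulting $f^{p/2}$ and $f^{q/2}$ past the inner products by means of Lemma \ref{lem_conv_inner} applied to the positive operators $|T_j|^{2\alpha}$ and $|T_j^*|^{2\beta}$, obtaining
\[f(|\langle T_j|T_j|^{\alpha+\beta-1}x,x\rangle|)\le \Big\langle\Big(\tfrac{1}{p}f^{p/2}(|T_j|^{2\alpha})+\tfrac{1}{q}f^{q/2}(|T_j^*|^{2\beta})\Big)x,x\Big\rangle.\]
Summing in $j$ and setting $B=\sum_{j=1}^{n}\bigl(\tfrac{1}{p}f^{p/2}(|T_j|^{2\alpha})+\tfrac{1}{q}f^{q/2}(|T_j^*|^{2\beta})\bigr)$, which is a positive operator, collects everything into $\sum_j f(|\langle T_j|T_j|^{\alpha+\beta-1}x,x\rangle|)\le\langle Bx,x\rangle\le\|B\|$. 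Applying the increasing map $f^{-1}$ and using $f^{-1}(\|B\|)=\|f^{-1}(B)\|$ (by the spectral mapping theorem, since $f^{-1}$ is continuous, increasing, and vanishes at $0$), then taking the supremum over unit vectors $x$, finishes the argument.

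The step I expect to be the main obstacle is the invocation of Lemma \ref{lem_conv_inner}, which requires the composite powers $f^{p/2}$ and $f^{q/2}$ to be convex on $[0,\infty)$; this does not follow formally from $f$ being merely geometrically convex. The hypotheses of the theorem are presumably meant to cover this (for the canonical example $f(t)=t^r$ convexity holds as soon as $r\ge 2/\min\{p,q\}$, and in the symmetric case $p=q=2$ one just needs $f$ convex). Once convexity of $f^{p/2}$ and $f^{q/2}$ is granted, the chain above is a direct composition of standard tools and no further subtlety arises.
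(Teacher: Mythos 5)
Your chain is essentially the paper's: Furuta's inequality \eqref{eq3.3} with $y=x$, then monotonicity plus geometric convexity of $f$, then a Young-type step producing $\tfrac1p f^{p/2}+\tfrac1q f^{q/2}$ (the paper gets there by H\"older over $j$ followed by Young, you by applying Young termwise --- the resulting bound is identical), then $f^{-1}$ and the supremum. The one place you go beyond the paper is exactly the step you flag: the paper's displayed chain stops at the \emph{scalar} quantities $f^{p/2}\bigl(\langle |T_j|^{2\alpha}x,x\rangle\bigr)$ and $f^{q/2}\bigl(\langle |T_j^*|^{2\beta}x,x\rangle\bigr)$ and simply concludes ``by taking the supremum,'' never justifying the passage to the operator expressions $f^{p/2}\bigl(|T_j|^{2\alpha}\bigr)$, $f^{q/2}\bigl(|T_j^*|^{2\beta}\bigr)$ inside the norm. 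Your insertion of Lemma \ref{lem_conv_inner}, followed by $\langle Bx,x\rangle\le\|B\|$ and $f^{-1}(\|B\|)=\|f^{-1}(B)\|$, is precisely what is needed to reach the stated right-hand side, so in that respect your write-up is more complete than the paper's own proof.

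Your worry about that insertion is legitimate, and it points to a gap in the theorem as stated rather than to a defect peculiar to your argument. Geometric convexity of $f$ does not yield convexity of $f^{p/2}$ or $f^{q/2}$: for instance $f(t)=t$ is geometrically convex, and if $p<2$ then $f^{p/2}(t)=t^{p/2}$ is concave, so Lemma \ref{lem_conv_inner} holds in the reverse direction and the passage from $f^{p/2}\bigl(\langle |T_j|^{2\alpha}x,x\rangle\bigr)$ to $\bigl\langle f^{p/2}\bigl(|T_j|^{2\alpha}\bigr)x,x\bigr\rangle$ fails in general. Since $\tfrac1p+\tfrac1q=1$ forces $\min\{p,q\}\le 2$, one of the two exponents is problematic unless $p=q=2$, and even then one needs $f$ itself convex, which geometric convexity does not imply (take $f(t)=\sqrt{t}$). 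So under the hypotheses as literally stated, neither the paper's proof nor your completion establishes the operator-norm form of the bound; what the argument proves is the scalar-level inequality before the Jensen step. To make the theorem sound one should add the hypothesis that $f^{p/2}$ and $f^{q/2}$ are convex (satisfied by $f(t)=t^r$ with $r\ge 2/\min\{p,q\}$, as you observe), under which your proof is complete and correct.
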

\begin{proof}
Employing \eqref{eq3.3} for the $n$-tuple operators $\left(T_1,\ldots,T_n\right)$, by setting $y=x$, we have
\begin{align*}
\sum\limits_{i=1}^{n}{	f\left(\left| {\left\langle {T_j\left| T_j \right|^{\alpha + \beta - 1} x,x} \right\rangle } \right| \right)}&\le\sum_{j=1}^{n}{f\left(\left\langle {\left| T_j \right|^{2\alpha } x,x} \right\rangle^{\frac{1}{2}} \left\langle {\left| T^*_j \right|^{2\beta } x,x} \right\rangle^{\frac{1}{2}} \right)}
\\
&\le\sum_{j=1}^{n}{f^{\frac{1}{2}}\left(\left\langle {\left| T_j \right|^{2\alpha } x,x} \right\rangle\right) f^{\frac{1}{2}}\left(\left\langle {\left| T^*_j \right|^{2\beta } x,x} \right\rangle \right)}
\\
&\le \left(\sum\limits_{j=1}^{n}{f^{\frac{p}{2}}\left(\left\langle {\left| T_j \right|^{2\alpha } x,x} \right\rangle\right) }\right)^{\frac{1}{p}}
\left(\sum\limits_{j=1}^{n}{f^{\frac{q}{2}}\left(\left\langle {\left| T^*_j \right|^{2\beta } x,x} \right\rangle\right) }\right)^{\frac{1}{q}}
\\
&\le \frac{1}{p}\sum_{j=1}^{n}{f^{\frac{p}{2}}\left(\left\langle {\left| T_j \right|^{2\alpha } x,x} \right\rangle\right) }
+\frac{1}{q}\sum_{j=1}^{n}{f^{\frac{q}{2}}\left(\left\langle {\left| T^*_j \right|^{2\beta } x,x} \right\rangle\right) }.
\end{align*}
Thus,
{\small
\begin{align*}
f^{-1}\left(	\sum\limits_{j=1}^{n}{	f\left(\left| {\left\langle {T_j\left| T_j \right|^{\alpha + \beta - 1} x,x} \right\rangle } \right| \right)} \right)&\le f^{-1}\left( \frac{1}{p}\sum_{j=1}^{n}{f^{\frac{p}{2}}\left(\left\langle {\left| T_j \right|^{2\alpha } x,x} \right\rangle\right) }
+\frac{1}{q}\sum_{j=1}^{n}{f^{\frac{q}{2}}\left(\left\langle {\left| T^*_j \right|^{2\beta } x,x} \right\rangle\right) } \right).
\end{align*}
}
We get the required result by taking the supremum over all unit vector $x\in \mathscr{H}$.
\end{proof}

A more straightforward upper bound of $\omega_f$ can be stated as follows.

\begin{theorem}\label{thm_w_f_finverse}
	Let $T_1,\ldots,T_n\in \mathbb{B}(\mathscr{H})$ and let $f:[0,\infty)\to [0,\infty)$ be an increasing convex function. Then
	\[{{\omega }_{f}}\left( {{T}_{1}},\ldots ,{{T}_{n}} \right)\le \left\| {{f}^{-1}}\left( \sum\limits_{j=1}^{n}{\left( \frac{f\left( {{\left| {{T}_{j}} \right|}^{2\alpha }} \right)+f\left( {{\left| T_{j}^{*} \right|}^{2\left( 1-\alpha  \right)}} \right)}{2} \right)} \right) \right\|,\]
for any $0\le \alpha\le 1$.
\end{theorem}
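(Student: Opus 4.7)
The plan is to apply Furuta's inequality \eqref{eq3.3} in the special case $\alpha + \beta = 1$ (so that $T_j|T_j|^{\alpha+\beta-1} = T_j$), combined with AM-GM, convexity of $f$, and the operator Jensen inequality from Lemma \ref{lem_conv_inner}, and finally the spectral identity $\|f^{-1}(A)\| = f^{-1}(\|A\|)$ for positive $A$.

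First, for each $j$, taking $\beta = 1-\alpha$ and $y=x$ in \eqref{eq3.3} yields
\[
|\langle T_j x, x\rangle|^{2} \le \langle |T_j|^{2\alpha} x, x\rangle \, \langle |T_j^*|^{2(1-\alpha)} x, x\rangle,
\]
and the arithmetic-geometric mean inequality then gives
\[
|\langle T_j x, x\rangle| \le \tfrac{1}{2}\bigl(\langle |T_j|^{2\alpha} x, x\rangle + \langle |T_j^*|^{2(1-\alpha)} x, x\rangle\bigr).
\]
Next I would apply $f$ to both sides (using monotonicity of $f$), invoke convexity of $f$ to distribute the $1/2$, and then use Lemma \ref{lem_conv_inner} on each of the positive operators $|T_j|^{2\alpha}$ and $|T_j^*|^{2(1-\alpha)}$ to obtain
\[
f(|\langle T_j x, x\rangle|) \le \tfrac{1}{2}\bigl(\langle f(|T_j|^{2\alpha}) x, x\rangle + \langle f(|T_j^*|^{2(1-\alpha)}) x, x\rangle\bigr).
\]
Summing in $j$ and applying the increasing function $f^{-1}$ produces
\[
f^{-1}\!\left(\sum_{j=1}^n f(|\langle T_j x, x\rangle|)\right) \le f^{-1}\bigl(\langle A_\alpha x, x\rangle\bigr),
\]
where $A_\alpha := \sum_{j=1}^n \tfrac{1}{2}\bigl(f(|T_j|^{2\alpha}) + f(|T_j^*|^{2(1-\alpha)})\bigr)$ is a positive operator.

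The final step converts the scalar right-hand side into the operator norm. Since $A_\alpha \ge 0$, we have $\langle A_\alpha x, x\rangle \le \|A_\alpha\|$ for unit $x$, and since $f^{-1}$ is continuous and increasing on $[0,\infty)$ with $f^{-1}(0)=0$, the spectral mapping theorem gives $f^{-1}(\|A_\alpha\|) = \|f^{-1}(A_\alpha)\|$. Taking the supremum over unit vectors $x\in\mathscr{H}$ on the left then yields exactly
\[
\omega_f(T_1,\ldots,T_n) \le \left\|f^{-1}\!\left(\sum_{j=1}^n \frac{f(|T_j|^{2\alpha}) + f(|T_j^*|^{2(1-\alpha)})}{2}\right)\right\|,
\]
as required.

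The main obstacle is the final identification $f^{-1}(\|A_\alpha\|) = \|f^{-1}(A_\alpha)\|$: one must be careful that $f^{-1}$ is well-defined and increasing on an interval containing the spectrum of $A_\alpha$, so that $\sigma(f^{-1}(A_\alpha)) = f^{-1}(\sigma(A_\alpha))$ and the maximum is attained at $\|A_\alpha\|$. The other subtle point is the legitimacy of the two-step pass from $|\langle T_j x, x\rangle|$ up through $f$ and Lemma \ref{lem_conv_inner}: the order of applying convexity of $f$ (a scalar inequality) and then operator Jensen is essential, because applying operator Jensen directly to $|T_j|^{2\alpha}+|T_j^*|^{2(1-\alpha)}$ is not available without a joint convexity hypothesis.
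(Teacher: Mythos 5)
Your proposal is correct and follows essentially the same route as the paper: the mixed Schwarz inequality $|\langle T_jx,x\rangle|\le \langle |T_j|^{2\alpha}x,x\rangle^{1/2}\langle |T_j^*|^{2(1-\alpha)}x,x\rangle^{1/2}$ (which you obtain from \eqref{eq3.3} with $\beta=1-\alpha$, while the paper invokes it directly as a Cauchy--Schwarz-type inequality), followed by AM--GM, convexity of $f$, Lemma \ref{lem_conv_inner}, and the identification $\sup_{\|x\|=1}f^{-1}(\langle A_\alpha x,x\rangle)=f^{-1}(\|A_\alpha\|)=\|f^{-1}(A_\alpha)\|$ for the positive operator $A_\alpha$. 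The one genuine difference is the order in which convexity and the operator Jensen lemma are used: the paper first applies Lemma \ref{lem_conv_inner} to the averaged operator $\tfrac{1}{2}\left(|T_j|^{2\alpha}+|T_j^*|^{2(1-\alpha)}\right)$ and then passes from $\left\langle f\!\left(\tfrac{A+B}{2}\right)x,x\right\rangle$ to $\left\langle \tfrac{f(A)+f(B)}{2}x,x\right\rangle$, a step that at the operator level requires operator convexity of $f$ rather than mere convexity; you instead apply scalar convexity to the two inner products first and only then use Lemma \ref{lem_conv_inner} on $|T_j|^{2\alpha}$ and $|T_j^*|^{2(1-\alpha)}$ separately, which needs nothing beyond the stated hypotheses. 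So your ordering is not just legitimate, it is the cleaner way to justify exactly the bound the paper states; your closing caveat about the spectral mapping step (well-definedness of $f^{-1}$ on an interval containing $\sigma(A_\alpha)$, guaranteed by the standing assumption that $f$ is continuous, increasing, with $f(0)=0$) is also handled more explicitly than in the paper, where it is left implicit in the final supremum.
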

\begin{proof}
For $0\le \alpha\le 1,$ the Cauchy-Schwarz inequality, together with the arithmetic-geometric mean inequality, implies
\begin{align*}
	\left| {\left\langle {T_j x,x} \right\rangle } \right| &\le \left\langle {\left| {T_j } \right|^{2\alpha } x,x} \right\rangle^{\frac{1}{2}} \left\langle {\left| {T_j^* } \right|^{2\left( {1 - \alpha } \right)} x,x} \right\rangle^{\frac{1}{2}}
	\\
	&\le
	\left\langle {\frac{{\left| {T_j } \right|^{2\alpha } + \left| {T_j^* } \right|^{2\left( {1 - \alpha } \right)} }}{2}x,x} \right\rangle,
\end{align*}
for the unit vector $x\in\mathscr{H}.$ 

Noting that $f$ is increasing, then applying Lemma \ref {lem_conv_inner} we have
\begin{align*}
\sum\limits_{j = 1}^n {f\left( {\left| {\left\langle {T_j x,x} \right\rangle } \right|} \right)}
&\le \sum\limits_{j = 1}^n {f\left(\left\langle {\left( { \frac{\left| {T_j } \right|^{2\alpha} + \left| {T_j^* } \right|^{2\left( {1 - \alpha } \right)}}{2}} \right)x,x} \right\rangle\right) }
\\
	&\le \sum\limits_{j = 1}^n { \left\langle {f\left( { \frac{\left| {T_j } \right|^{2\alpha} + \left| {T_j^* } \right|^{2\left( {1 - \alpha } \right)}}{2}} \right)x,x} \right\rangle }
	\\
	&\le
	\sum\limits_{j = 1}^n {\left\langle {\left( {\frac{{f\left( {\left| {T_j } \right|^{2\alpha } } \right) + f\left( {\left| {T_j^* } \right|^{2\left( {1 - \alpha } \right)} } \right)}}{2}} \right)x,x} \right\rangle }
	\\
	&=
	\left\langle {\sum\limits_{j = 1}^n {\left( {\frac{{f\left( {\left| {T_j } \right|^{2\alpha } } \right) + f\left( {\left| {T_j^* } \right|^{2\left( {1 - \alpha } \right)} } \right)}}{2}} \right)x} ,x} \right\rangle,
\end{align*}
which implies 
\begin{align*}
f^{-1}\left(	\sum\limits_{j = 1}^n {f\left( {\left| {\left\langle {T_j x,x} \right\rangle } \right|} \right)} \right)
&\le f^{-1}\left( \left\langle {\sum\limits_{j = 1}^n {\left( {\frac{{f\left( {\left| {T_j } \right|^{2\alpha } } \right) + f\left( {\left| {T_j^* } \right|^{2\left( {1 - \alpha } \right)} } \right)}}{2}} \right)x} ,x} \right\rangle \right).
\end{align*}
We get the required result by taking the supremum over all unit vectors $x\in \mathscr{H}$, noting that $f^{-1}$ is also increasing.
\end{proof}

Another bound, similar to that in Theorem \ref{thm_w_f_finverse}, can be stated as follows. The proof is very similar to that of Theorem \ref{thm_w_f_finverse}, so we do not include it here.
\begin{theorem}\label{thm_w_f_finverse}
	Let $T_1,\ldots,T_n\in \mathbb{B}(\mathscr{H})$ and let $f:[0,\infty)\to [0,\infty)$ be an increasing convex function. If $p_j>0$ so that $\sum_{j=1}^{n}p_j=1$, then
	\begin{align*}
	{{\omega }_{f}}\left(p_1 {{T}_{1}},\ldots ,p_n{{T}_{n}} \right) \le  \left\|f^{-1}\left( \sum\limits_{j = 1}^n p_j\left( {\frac{{f\left( {\left| {T_j } \right|^{2\alpha } } \right) + f\left( {\left| {T_j^* } \right|^{2\left( {1 - \alpha } \right)} } \right)}}{2}} \right)\right) \right\|,	 
	\end{align*}
for any $0\le \alpha\le 1$.
\end{theorem}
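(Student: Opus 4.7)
The plan is to adapt the proof of the previous theorem, inserting one additional convexity step to absorb the weights $p_j$. First I would unfold the definition of $\omega_f$, so the task reduces to bounding
\[f^{-1}\!\left(\sum_{j=1}^n f\bigl(p_j|\langle T_jx,x\rangle|\bigr)\right)\]
from above uniformly over unit vectors $x\in\mathscr{H}$.

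The new ingredient is a simple consequence of convexity together with the standing assumption $f(0)=0$: for $0<p_j\le 1$ and $t\ge 0$,
\[f(p_j t)=f\bigl(p_j\cdot t+(1-p_j)\cdot 0\bigr)\le p_j f(t)+(1-p_j)f(0)=p_j f(t).\]
Applied termwise, this transfers the scalar weights outside $f$ and leaves $\sum_j p_j f(|\langle T_jx,x\rangle|)$ inside $f^{-1}$.

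From this point I would reuse, termwise, the pointwise bound already established in the proof of the previous theorem, namely
\[f\bigl(|\langle T_jx,x\rangle|\bigr)\le \left\langle \frac{f(|T_j|^{2\alpha})+f(|T_j^*|^{2(1-\alpha)})}{2}x,x\right\rangle,\]
obtained from the mixed Schwarz and arithmetic--geometric mean inequalities, monotonicity of $f$, and Lemma~\ref{lem_conv_inner}. Weighting by $p_j$ and summing, linearity of the inner product collapses the right-hand side into $\langle Ax,x\rangle$, where
\[A=\sum_{j=1}^n p_j\cdot\frac{f(|T_j|^{2\alpha})+f(|T_j^*|^{2(1-\alpha)})}{2}\ge 0.\]

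The final step is to apply the increasing function $f^{-1}$ and take the supremum over $\|x\|=1$. Since $A\ge 0$, we have $\sup_{\|x\|=1}\langle Ax,x\rangle=\|A\|$, and a short functional-calculus argument shows $\sup_{\|x\|=1}f^{-1}(\langle Ax,x\rangle)=f^{-1}(\|A\|)=\|f^{-1}(A)\|$, the spectrum of $f^{-1}(A)$ lying in $[0,f^{-1}(\|A\|)]$ with its maximum attained at the top. I expect this last functional-calculus identification to be the only delicate point; every other step is a direct port of the previous proof, enhanced by the weight estimate $f(p_jt)\le p_j f(t)$.
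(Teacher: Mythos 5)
Your proposal is correct and is essentially the proof the paper has in mind: the paper omits it, saying it is ``very similar'' to the preceding theorem, and your argument is exactly that adaptation, with the weight-absorption step $f(p_j t)\le p_j f(t)$ (convexity plus the standing assumption $f(0)=0$) being the only new ingredient before reusing the termwise bound, linearity of the inner product, and the monotonicity/functional-calculus step $f^{-1}(\langle Ax,x\rangle)\le f^{-1}(\|A\|)=\|f^{-1}(A)\|$. No gaps beyond those already present in the paper's own treatment of the unweighted case.
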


In the following result, a super-multiplicative function refers to a function $f:[0,\infty)\to [0,\infty)$ such that $f(a)f(b)\le f(ab)$ for all $a,b\in [0,\infty).$ We notice that all power functions $f(t)=t^r, r>0$ are such functions.
\begin{theorem}\label{thm_w_f_10}
	Let $T_1,\ldots,T_n\in \mathbb{B}(\mathscr{H})$ and let $f:[0,\infty)\to [0,\infty)$ be an increasing, convex and super-multiplicative function. Then
\[{{\omega }_{f}}\left( {{T}_{1}},\ldots ,{{T}_{n}} \right)\le \left\| {{f}^{-1}}\left( \sqrt{n\sum\limits_{j=1}^{n}{f\left( \frac{T_{j}^{*}{{T}_{j}}+{{T}_{j}}T_{j}^{*}}{2} \right)}} \right) \right\|.\]	
\end{theorem}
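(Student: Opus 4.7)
The plan is to combine the mixed Cauchy--Schwarz--AM--GM estimate on $|\langle T_j x,x\rangle|^2$ with the super-multiplicativity of $f$ (to bring squares inside $f$), then apply Lemma \ref{lem_conv_inner} to move the inner product inside $f$ at the operator level, and finally use a summed Cauchy--Schwarz plus the fact that $f^{-1}(\|A\|)=\|f^{-1}(A)\|$ for any positive operator $A$.

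Concretely, fix a unit vector $x\in\mathscr H$. First I would record the basic estimate
\[
|\langle T_j x,x\rangle|^{2}\le \|T_j x\|\,\|T_j^{*}x\|
\le \frac{\langle T_j^{*}T_j x,x\rangle+\langle T_j T_j^{*}x,x\rangle}{2}
=\Bigl\langle \frac{T_j^{*}T_j+T_j T_j^{*}}{2}x,x\Bigr\rangle,
\]
obtained from the Cauchy--Schwarz inequality followed by AM--GM. Next, the hypothesis that $f$ is super-multiplicative yields $f(a)^{2}=f(a)f(a)\le f(a^{2})$, i.e.\ $f(a)\le\sqrt{f(a^{2})}$ for $a\ge 0$. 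Applying this with $a=|\langle T_j x,x\rangle|$, together with the monotonicity of $f$ and Lemma \ref{lem_conv_inner} (using that $A_j:=\tfrac{T_j^{*}T_j+T_j T_j^{*}}{2}$ is positive and $f$ is convex), gives
\[
f\bigl(|\langle T_j x,x\rangle|\bigr)\le \sqrt{f\bigl(\langle A_j x,x\rangle\bigr)}\le \sqrt{\langle f(A_j)x,x\rangle}.
\]

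Summing over $j$ and applying the discrete Cauchy--Schwarz inequality $\sum_{j=1}^{n}\sqrt{b_j}\le\sqrt{n\sum_{j=1}^{n}b_j}$ with $b_j=\langle f(A_j)x,x\rangle$, I obtain
\[
\sum_{j=1}^{n} f\bigl(|\langle T_j x,x\rangle|\bigr)
\le \sqrt{\,n\Bigl\langle \sum_{j=1}^{n} f(A_j)\,x,x\Bigr\rangle\,}.
\]
Applying the increasing function $f^{-1}$ to both sides therefore yields
\[
f^{-1}\!\left(\sum_{j=1}^{n} f\bigl(|\langle T_j x,x\rangle|\bigr)\right)
\le f^{-1}\!\left(\sqrt{\,n\Bigl\langle \sum_{j=1}^{n} f(A_j)\,x,x\Bigr\rangle\,}\right).
\]

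The last step is to pass from pointwise evaluation to the operator norm. Let $S:=\sum_{j=1}^{n}f(A_j)\ge 0$; then $\sup_{\|x\|=1}\langle Sx,x\rangle=\|S\|$, and by the continuous functional calculus applied to the positive operator $\sqrt{nS}$ one has $\|f^{-1}(\sqrt{nS})\|=f^{-1}(\|\sqrt{nS}\|)=f^{-1}(\sqrt{n\|S\|})$, because $f^{-1}$ is continuous and increasing with $f^{-1}(0)=0$. Taking the supremum over unit $x$ of the displayed inequality and using this identity produces exactly the required bound. The step I expect to be most delicate is the clean transition from the scalar bound $\sqrt{n\langle Sx,x\rangle}$ to the operator norm $\|f^{-1}(\sqrt{nS})\|$; here one must explicitly invoke that for a positive operator $A$, $\|f^{-1}(A)\|=f^{-1}(\|A\|)$, which is where the assumptions that $f$ is continuous, increasing, and vanishes at $0$ are all used simultaneously.
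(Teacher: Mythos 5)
Your proof is correct and follows essentially the same route as the paper: super-multiplicativity to replace $f(a)$ by $\sqrt{f(a^2)}$, Lemma \ref{lem_conv_inner} to move the inner product inside $f$ at the operator level, a quadratic-mean step producing the factor $\sqrt{n}$ (your discrete Cauchy--Schwarz bound $\sum_j\sqrt{b_j}\le\sqrt{n\sum_j b_j}$ is exactly the paper's Jensen inequality for $t^2$), and a final supremum plus functional-calculus identification of the operator-norm bound. The only real variation is the first step, where you obtain $|\langle T_jx,x\rangle|^2\le\|T_jx\|\,\|T_j^{*}x\|\le\bigl\langle \tfrac{T_j^{*}T_j+T_jT_j^{*}}{2}x,x\bigr\rangle$ via Cauchy--Schwarz and AM--GM, while the paper derives the same intermediate inequality from the Cartesian decomposition $T_j=B_j+\mathrm{i}C_j$ and the convexity of $t^2$; you are in fact more explicit than the paper in justifying the closing identity $\|f^{-1}(\sqrt{nS})\|=f^{-1}(\sqrt{n\|S\|})$ for $S=\sum_{j=1}^{n}f\bigl(\tfrac{T_j^{*}T_j+T_jT_j^{*}}{2}\bigr)$.
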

\begin{proof}
Let $B_j+ \mathrm i C_j$ be the Cartesian decomposition of the Hilbert space operators $T_j$, for  $j=1,\cdots,n$. We have
\begin{align*}
	\left| {\left\langle {T_j x,x} \right\rangle } \right|^2 &= \left\langle {B_j x,x} \right\rangle ^2 + \left\langle {C_j x,x} \right\rangle ^2
	\\
	&\le \left\langle {B_j^2 x,x} \right\rangle + \left\langle {C_j^2 x,x} \right\rangle = \left\langle {\left( {B_j^2 + C_j^2 } \right)x,x} \right\rangle,
	\end{align*}
	where we have used Lemma \ref{lem_conv_inner} to obtain the last inequality, noting that both $B_j$ and $C_j$ are self-adjoint and that $f(t)=t^2$ is convex.
But since $f$ is increasing, super-multiplicative and convex, we have
	\begin{align*}
f^2\left( {\left| {\left\langle {T_j x,x} \right\rangle } \right| } \right) \le	f\left( {\left| {\left\langle {T_j x,x} \right\rangle } \right|^2 } \right) \le f\left( {\left\langle {\left( {B_j^2 + C_j^2 } \right)x,x} \right\rangle } \right) &\le \left\langle {f\left( {B_j^2 + C_j^2 } \right)x,x} \right\rangle
\end{align*}
which implies that
	\begin{align*}
	\sum\limits_{j = 1}^n {\left(f\left( {\left| {\left\langle {T_j x,x} \right\rangle } \right| } \right) \right)^2} \le \sum\limits_{j = 1}^n {f\left( {\left\langle {\left( {B_j^2 + C_j^2 } \right)x,x} \right\rangle } \right)} \le \sum\limits_{j = 1}^n {\left\langle {f\left( {B_j^2 + C_j^2 } \right)x,x} \right\rangle }.
\end{align*}
Applying Jensen's inequality to the function $g(t)=t^2$ implies 
	\begin{align*}
\frac{1}{n^2}\left(\sum\limits_{j = 1}^n {f\left( {\left| {\left\langle {T_j x,x} \right\rangle } \right| } \right) }\right)^2 &\le	\frac{1}{n}\sum\limits_{j = 1}^n {\left(f\left( {\left| {\left\langle {T_j x,x} \right\rangle } \right| } \right) \right)^2}\\
&\le \frac{1}{n}\sum\limits_{j = 1}^n {\left\langle {f\left( {B_j^2 + C_j^2 } \right)x,x} \right\rangle },
\end{align*}
and this is equivalent to 
	\begin{align*}
	\sum\limits_{j = 1}^n {f\left( {\left| {\left\langle {T_j x,x} \right\rangle } \right| } \right) } \le \left( n\sum\limits_{j = 1}^n {\left\langle {f\left( {B_j^2 + C_j^2 } \right)x,x} \right\rangle }\right) ^{\frac{1}{2}}.
\end{align*}
Also, since $f$ is increasing, we get
	\begin{align*}
	f^{ - 1} \left( {\sum\limits_{j = 1}^n {f\left( {\left| {\left\langle {T_j x,x} \right\rangle } \right|^2 } \right)} } \right) &\le f^{ - 1} \left( {\left( n\sum\limits_{j = 1}^n {\left\langle {f\left( {B_j^2 + C_j^2 } \right)x,x} \right\rangle }\right) ^{\frac{1}{2}}} \right)
\\
	&=
	f^{ - 1} \left( {\sqrt{n}\left\langle {\sum\limits_{j = 1}^n {f\left( {B_j^2 + C_j^2 } \right)x} ,x} \right\rangle^{\frac{1}{2}} } \right)
\\
	&=
	f^{ - 1} \left( {\sqrt{n}\left\langle {\sum\limits_{j = 1}^n {f\left( {\frac{T_j^*T_j + T_jT_j ^*}{2} } \right)x} ,x} \right\rangle^{\frac{1}{2}} } \right).
\end{align*}
We get the required result by taking the supremum over all unit vector $x\in \mathscr{H}$.
\end{proof}

In the following remark, we explain the significance of Theorem \ref{thm_w_f_10}.

\begin{remark}
Taking $f\left(t\right)=t^2$, $t\ge0$, Theorem \ref{thm_w_f_10} implies
	\begin{align}
	 {{\omega }_{\rm{e} }}\left( {{T}_{1}},\ldots ,{{T}_{n}} \right)\le
\sqrt{\frac{\sqrt{n}}{2}\left\| {\sum\limits_{j = 1}^n { \left( {T_j^*T_j + T_jT_j ^*} \right)^2} } \right\|^{\frac{1}{2}}}.\label{eq3.7}
\end{align}
In particular, choosing $n=1$ and $T_1=T$, we get
	\begin{align*}
	 {{\omega } }\left( T \right)\le
	\sqrt{\frac{1}{2}\left\| { T^*T + TT ^* } \right\| },
\end{align*}
or 
	\begin{align*}
	 {{\omega } }^2\left( T \right)\le
\frac{1}{2}\left\| { T^*T + TT ^* } \right\|,
\end{align*}
which is an outstanding result of Kittaneh \eqref{ineq_kitt_2}. A more general form of the inequality \eqref{eq3.7} could be stated by taking $f\left(t\right)=t^p$, $t\ge0$ $\left(p\ge1\right)$, in Theorem \ref{thm_w_f_10}
	\begin{align*}
{{\omega }_p^p}\left( {{T}_{1}},\ldots ,{{T}_{n}} \right)\le
\frac{\sqrt{n}}{2^{\frac{p}{2}}}\left\| {\sum\limits_{j = 1}^n { \left( {T_j^*T_j + T_jT_j ^*} \right)^p} } \right\|^{\frac{1}{2}}
\end{align*}
holds for all $p\ge1$.
\end{remark}

\begin{theorem}
Let $B_j+ \mathrm i C_j$ be the Cartesian decomposition of the Hilbert space operators $T_j\in \mathbb{B}\left(
\mathscr{H}\right)$ $(j=1,\ldots,n)$. Let $f:[0,\infty)\to [0,\infty)$ be an increasing convex function that satisfies  $f\left(0\right)=0$. Then
\begin{align*}
	{{\omega }_{f}}\left( {{T}_{1}},\ldots ,{{T}_{n}} \right)\le
 \left\| {{f}^{-1}}\left(\sum_{j=1}^{n}{f\left( \left| {{B}_{j}} \right|+\left| {{C}_{j}} \right| \right)} \right) \right\|.
\end{align*}
\end{theorem}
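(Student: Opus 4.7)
The plan is to proceed along the same lines as the proof of Theorem \ref{thm_w_f_finverse}: first bound $|\langle T_j x, x\rangle|$ by an inner product involving $|B_j|+|C_j|$, then promote the scalar estimate to an operator inequality via Lemma \ref{lem_conv_inner}, and finally convert the resulting inner product into an operator norm.

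Fix a unit vector $x \in \mathscr{H}$. Writing $\langle T_j x, x\rangle = \langle B_j x, x\rangle + \mathrm i\langle C_j x, x\rangle$, the triangle inequality together with the standard scalar bound $|\langle S x, x\rangle| \le \langle |S| x, x\rangle$ valid for any self-adjoint $S$ (immediate from the spectral theorem) gives
$$|\langle T_j x, x\rangle| \le |\langle B_j x, x\rangle| + |\langle C_j x, x\rangle| \le \langle (|B_j|+|C_j|)x, x\rangle.$$
Monotonicity of $f$ then yields $f(|\langle T_j x, x\rangle|) \le f(\langle(|B_j|+|C_j|)x, x\rangle)$, and since $|B_j|+|C_j|$ is self-adjoint and $f$ is convex, Lemma \ref{lem_conv_inner} upgrades this to
$$f(|\langle T_j x, x\rangle|) \le \langle f(|B_j|+|C_j|)x, x\rangle.$$
Summing over $j$ and applying the increasing function $f^{-1}$ produces
$$f^{-1}\!\left(\sum_{j=1}^n f(|\langle T_j x, x\rangle|)\right) \le f^{-1}(\langle A x, x\rangle),$$
where $A := \sum_{j=1}^n f(|B_j|+|C_j|)$ is a positive operator.

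The only subtle step, which I expect to be the main (and essentially only) obstacle, is passing from $f^{-1}(\langle Ax, x\rangle)$ to $\|f^{-1}(A)\|$. Since $A\ge 0$ and $\|x\|=1$, we have $\langle Ax, x\rangle \le \|A\|$, and monotonicity of $f^{-1}$ gives $f^{-1}(\langle Ax, x\rangle) \le f^{-1}(\|A\|)$. Because $A$ is positive its norm is attained in its spectrum, so the continuous functional calculus (combined with monotonicity of $f^{-1}$) yields $f^{-1}(\|A\|) = \|f^{-1}(A)\|$. Taking the supremum over unit vectors $x$ then produces the claimed bound.
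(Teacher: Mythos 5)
Your proposal is correct and follows essentially the same route as the paper's proof: bound $|\langle T_jx,x\rangle|$ by $\langle(|B_j|+|C_j|)x,x\rangle$, apply $f$ and Lemma \ref{lem_conv_inner} to the self-adjoint operator $|B_j|+|C_j|$, sum, apply the increasing $f^{-1}$, and convert $f^{-1}(\langle Ax,x\rangle)\le f^{-1}(\|A\|)=\|f^{-1}(A)\|$ before taking the supremum. The only cosmetic difference is that the paper invokes Lemma \ref{lem_conv_inner} twice (once for $|\langle B_jx,x\rangle|\le\langle|B_j|x,x\rangle$ via convexity of $t\mapsto|t|$), whereas you justify that step directly by the spectral theorem.
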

\begin{proof}
Let $B_j+ \mathrm i C_j$ be the Cartesian decomposition of the Hilbert space operators $T_j$ for all $j=1,\ldots,n$. If $x\in\mathscr{H}$ is a unit vector, we have
\[\begin{aligned}
   \sum\limits_{j=1}^{n}{f\left( \left| \left\langle {{T}_{j}}x,x \right\rangle  \right| \right)}&=\sum\limits_{j=1}^{n}{f\left( \sqrt{{{\left\langle {{B}_{j}}x,x \right\rangle }^{2}}+{{\left\langle {{C}_{j}}x,x \right\rangle }^{2}}} \right)} \\ 
 & \le \sum\limits_{j=1}^{n}{f\left( \left| \left\langle {{B}_{j}}x,x \right\rangle  \right|+\left| \left\langle {{C}_{j}}x,x \right\rangle  \right| \right)} \\ 
 & \le \sum\limits_{j=1}^{n}{f\left( \left\langle \left( \left| {{B}_{j}} \right|+\left| {{C}_{j}} \right| \right)x,x \right\rangle  \right)} \\ 
 & \le \sum\limits_{j=1}^{n}{\left\langle f\left( \left| {{B}_{j}} \right|+\left| {{C}_{j}} \right| \right)x,x \right\rangle }  
\end{aligned}\]
where we have used Lemma \ref{lem_conv_inner} twice to obtain the last two inequalities.
Thus, since $f$ is increasing,
	\[\begin{aligned}
   {{f}^{-1}}\left( \sum\limits_{j=1}^{n}{f\left( \left| \left\langle {{T}_{j}}x,x \right\rangle  \right| \right)} \right)&\le {{f}^{-1}}\left( \sum\limits_{j=1}^{n}{\left\langle f\left( \left| {{B}_{j}} \right|+\left| {{C}_{j}} \right| \right)x,x \right\rangle } \right) \\ 
 & ={{f}^{-1}}\left( \left\langle \left(\sum_{j=1}^{n}{f\left( \left| {{B}_{j}} \right|+\left| {{C}_{j}} \right| \right)} \right)x,x \right\rangle  \right) \\ 
 & \le {{f}^{-1}}\left( \left\|\sum_{j=1}^{n}{f\left( \left| {{B}_{j}} \right|+\left| {{C}_{j}} \right| \right)} \right\| \right) \\ 
 & = \left\| {{f}^{-1}}\left(\sum_{j=1}^{n}{f\left( \left| {{B}_{j}} \right|+\left| {{C}_{j}} \right| \right)} \right) \right\|,
\end{aligned}\]
where we obtain the last equality because $f$ is increasing.
\end{proof}

Now, extending \eqref{ineq_yama_1} to $\omega_f$, we have the following.

\begin{theorem}
Let $T_1,\ldots,T_n\in \mathbb{B}(\mathscr{H})$ and let $f:[0,\infty)\to [0,\infty)$ be an increasing convex function. Then
	\[{{\omega }_{f}}\left( {{T}_{1}},\ldots ,{{T}_{n}} \right)\le {{f}^{-1}}\left(\sum_{j=1}^{n}{\left( \frac{f\left( \left\| {T_j} \right\| \right)+f\left( \omega \left( \widetilde{{T_j}} \right) \right)}{2} \right)} \right).\]
\end{theorem}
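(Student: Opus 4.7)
The plan is to reduce the inequality, term by term, to Yamazaki's estimate \eqref{ineq_yama_1} and then combine via convexity of $f$ plus monotonicity of $f$ and $f^{-1}$. Fix a unit vector $x \in \mathscr{H}$ and start from the trivial bound $|\langle T_j x, x\rangle| \le \omega(T_j)$. By \eqref{ineq_yama_1}, for every $j \in \{1,\ldots,n\}$,
\[
|\langle T_j x, x\rangle| \le \omega(T_j) \le \frac{1}{2}\bigl(\|T_j\| + \omega(\widetilde{T_j})\bigr).
\]

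Since $f$ is increasing, I would apply $f$ to both sides, and then use convexity of $f$ to split the right-hand side:
\[
f\bigl(|\langle T_j x, x\rangle|\bigr) \le f\!\left(\frac{\|T_j\| + \omega(\widetilde{T_j})}{2}\right) \le \frac{f(\|T_j\|) + f(\omega(\widetilde{T_j}))}{2}.
\]
Summing over $j = 1, \ldots, n$ gives
\[
\sum_{j=1}^{n} f\bigl(|\langle T_j x, x\rangle|\bigr) \le \sum_{j=1}^{n} \frac{f(\|T_j\|) + f(\omega(\widetilde{T_j}))}{2}.
\]

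Applying the increasing function $f^{-1}$ to both sides and then taking the supremum over unit vectors $x \in \mathscr{H}$ on the left yields exactly
\[
\omega_f(T_1,\ldots,T_n) \le f^{-1}\!\left(\sum_{j=1}^{n} \frac{f(\|T_j\|) + f(\omega(\widetilde{T_j}))}{2}\right),
\]
which is the desired estimate. There is no serious obstacle here: the argument is essentially a pointwise lift of \eqref{ineq_yama_1} through the monotone-convex machinery already in use. The only item to be careful about is that the sum on the right is a constant (independent of $x$), so $f^{-1}$ passes outside the supremum without any further subadditivity step — this is why one does not need $f(0) = 0$ or any extra hypothesis beyond increasingness and convexity of $f$.
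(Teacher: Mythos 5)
Your proof is correct, but it takes a genuinely different and much shorter route than the paper. You treat Yamazaki's inequality \eqref{ineq_yama_1} as a black box: starting from the scalar bound $\left|\left\langle T_jx,x\right\rangle\right|\le \omega(T_j)$, you push through $\omega(T_j)\le \tfrac{1}{2}\left(\|T_j\|+\omega(\widetilde{T_j})\right)$, apply monotonicity of $f$, midpoint convexity $f\left(\tfrac{a+b}{2}\right)\le\tfrac{f(a)+f(b)}{2}$, sum, and invert --- everything happens at the scalar level, and since the right-hand side is independent of $x$ the supremum passes without any subadditivity or $f(0)=0$ argument. The paper instead essentially re-derives Yamazaki's argument from scratch inside the proof: it uses the polar decomposition $T_j=U_j|T_j|$, the identity expressing $\mathfrak{R}\left(e^{\mathrm i\theta}\left\langle T_jx,x\right\rangle\right)$ via $\tfrac{1}{4}\left\langle (e^{-\mathrm i\theta}+U_j)|T_j|(e^{\mathrm i\theta}+U_j^*)x,x\right\rangle$ minus a positive term, applies the operator Jensen inequality (Lemma \ref{lem_conv_inner}) to move $f$ inside at the operator level, and then uses norm manipulations of the form $\left\|A^{1/2}XA^{1/2}\right\|$ together with Lemma \ref{lemma_theta} ($\sup_\theta\|\mathfrak{R}(e^{\mathrm i\theta}\widetilde{T_j})\|=\omega(\widetilde{T_j})$) to arrive at exactly the same final bound. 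What your approach buys is economy and transparency: it exhibits the theorem as an immediate consequence of the already-established estimate $\omega_f(T_1,\ldots,T_n)\le f^{-1}\left(\sum_{j=1}^n f(\omega(T_j))\right)$ (the refinement \eqref{eq_w_f_1}) combined with \eqref{ineq_yama_1}, needing only scalar convexity. What the paper's route buys is self-containedness (it does not quote Yamazaki's theorem, only the two lemmas) and it keeps $f$ at the operator level throughout, which is the natural template if one hoped to sharpen the bound before passing to norms --- although, as your argument makes clear, the final estimate obtained there is no better than the one reached by your direct reduction.
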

\begin{proof}
For each $T_j$, let $T_j=U_j|T_j|$ be the polar decomposition of $T_j$. By Lemma \ref{lemma_theta}, if $x\in\mathscr{H}$ is a unit vector, it follows that  $|\left<T_jx,x\right>|\le \mathfrak{R}\left\{e^{\mathrm i\theta}\left<T_jx,x\right>\right\}$, for all $\theta\in\mathbb{R}$. Then, for all $\theta$, we have
\[\begin{aligned}
  & \left| \left\langle {T_j}x,x \right\rangle  \right| \\ 
 &\le\mathfrak R\left\{ {{e}^{\mathrm i{\theta}}}\left\langle {T_j}x,x \right\rangle  \right\} \\ 
 & =\frac{1}{4}\left\langle \left( {{e}^{-\mathrm i{\theta}}}+{U_j} \right)\left| {T_j} \right|\left( {{e}^{\mathrm i{\theta}}}+U_j^* \right)x,x \right\rangle -\frac{1}{4}\left\langle \left( {{e}^{-\mathrm i{\theta}}}-{U_j} \right)\left| {T_j} \right|\left( {{e}^{\mathrm i{\theta}}}-U_j^* \right)x,x \right\rangle  \\ 
 & \le \frac{1}{4}\left\langle \left( {{e}^{-\mathrm i{\theta}}}+{U_j} \right)\left| {T_j} \right|\left( {{e}^{\mathrm i{\theta}}}+U_j^* \right)x,x \right\rangle. 
\end{aligned}\]
Thus,
\[\begin{aligned}
  \sum_{j=1}^{n}{f\left( \left| \left\langle {T_j}x,x \right\rangle  \right| \right)}&\le\sum_{j=1}^{n}{f\left( \frac{1}{4}\left\langle \left( {{e}^{-\mathrm i{\theta}}}+{U_j} \right)\left| {T_j} \right|\left( {{e}^{\mathrm i{\theta}}}+U_j^* \right)x,x \right\rangle  \right)} \\ 
 & \le\sum_{j=1}^{n}{f\left( \left\langle \left( \frac{\left( {{e}^{-\mathrm i{\theta}}}+{U_j} \right)\left| {T_j} \right|\left( {{e}^{\mathrm i{\theta}}}+U_j^* \right)}{4} \right)x,x \right\rangle  \right)} \\ 
 & \le\sum_{j=1}^{n}{\left\langle f\left( \frac{\left( {{e}^{-\mathrm i{\theta}}}+{U_j} \right)\left| {T_j} \right|\left( {{e}^{\mathrm i{\theta}}}+U_j^* \right)}{4} \right)x,x \right\rangle } \\ 
 & \le\sum_{j=1}^{n}{f\left( \left\| \frac{\left( {{e}^{-\mathrm i{\theta}}}+{U_j} \right)\left| {T_j} \right|\left( {{e}^{\mathrm i{\theta}}}+U_j^* \right)}{4} \right\| \right)} \\ 
 & =\sum\limits_{i=1}^{n}{f\left( \left\| \frac{{{\left| {T_j} \right|}^{\frac{1}{2}}}\left( {{e}^{\mathrm i{\theta}}}+U_j^* \right)\left( {{e}^{-\mathrm i{\theta}}}+{U_j} \right){{\left| {T_j} \right|}^{\frac{1}{2}}}}{4} \right\| \right)} \\ 
 & =\sum\limits_{i=1}^{n}{f\left( \left\| \frac{2\left| {T_j} \right|+{{e}^{\mathrm i{\theta}}}\widetilde{{T_j}}+{{e}^{-\mathrm i{\theta}}}{{\left( \widetilde{{T_j}} \right)}^{*}}}{4} \right\| \right)} \\ 
 & =\sum\limits_{i=1}^{n}{f\left( \left\| \frac{\left| {T_j} \right|+\mathfrak R{{e}^{\mathrm i{\theta}}}\widetilde{{T_j}}}{2} \right\| \right)}.  
\end{aligned}\]
On the other hand,
\[\begin{aligned}
  \sum_{j=1}^{n}{\left\| f\left( \frac{\left| {T_j} \right|+{{\operatorname{\mathfrak Re}}^{\mathrm i{\theta}}}\widetilde{{T_j}}}{2} \right) \right\|}&\le \frac{1}{2}\sum\limits_{i=1}^{n}{\left\| f\left( \left| {T_j} \right| \right)+f\left( {{\operatorname{\mathfrak Re}}^{\mathrm i{\theta}}}\widetilde{{T_j}} \right) \right\|} \\ 
 & \le \frac{1}{2}\sum\limits_{i=1}^{n}{\left( \left\| f\left( \left| {T_j} \right| \right) \right\|+\left\| f\left( {{\operatorname{\mathfrak Re}}^{\mathrm i{{\theta }}}}\widetilde{{T_j}} \right) \right\| \right)} \\ 
 & =\frac{1}{2}\sum\limits_{i=1}^{n}{\left( f\left\| ~\left| {T_j} \right|~ \right\|+f\left( \left\| {{\operatorname{\mathfrak Re}}^{\mathrm i{\theta}}}\widetilde{{T_j}} \right\| \right) \right)} \\ 
 & \le \frac{1}{2}\sum\limits_{i=1}^{n}{\left( f\left( \left\| {T_j} \right\| \right)+f\left( \omega \left( \widetilde{{T_j}} \right) \right) \right).}  
\end{aligned}\]
So,
\[{{f}^{-1}}\left(\sum_{j=1}^{n}{f\left( \left| \left\langle {T_j}x,x \right\rangle  \right| \right)} \right)\le {{f}^{-1}}\left(\sum_{j=1}^{n}{\left( \frac{f\left( \left\| {T_j} \right\| \right)+f\left( \omega \left( \widetilde{{T_j}} \right) \right)}{2} \right)} \right),\]
which completes the proof.
\end{proof}

We close this paper by introducing an upper bound for the generalized Davis-Wielandt radius.
\begin{corollary}
Let $T\in \mathbb B\left( \mathscr H \right)$ with the polar decomposition $T=U|T|$ and let $f:[0,\infty)\to [0,\infty)$ be an increasing convex function. Then
\[{{\omega }_{f}}\left( T,{{T}^{*}}T \right)\le {{f}^{-1}}\left( \frac{f\left( \left\| T \right\| \right)+f\left( \omega \left( \widetilde{T} \right) \right)+f\left( {{\left\| T \right\|}^{2}} \right)+f\left( \omega \left( \left| T \right|U\left| T \right| \right) \right)}{2} \right).\]
\end{corollary}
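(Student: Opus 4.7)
The plan is to obtain the inequality as a direct specialization of the preceding theorem to the pair $(T_{1},T_{2})=(T,T^{*}T)$. Applied in this way, the theorem yields
\[
\omega_{f}(T,T^{*}T)\;\le\;f^{-1}\!\left(\frac{f(\|T\|)+f(\omega(\widetilde{T}))+f(\|T^{*}T\|)+f(\omega(\widetilde{T^{*}T}))}{2}\right),
\]
so the only remaining task is to rewrite the two summands involving $T^{*}T$ in the form announced in the statement.

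For the norm term, the $C^{*}$-identity $\|T^{*}T\|=\|T\|^{2}$ is immediate and accounts for the $f(\|T\|^{2})$ summand. For the Aluthge term, note that since $T^{*}T$ is positive one has $|T^{*}T|=T^{*}T$ and hence $(T^{*}T)^{1/2}=|T|$; using the partial isometry $U$ coming from the polar decomposition $T=U|T|$ fixed in the hypothesis, the Aluthge transform of $T^{*}T$ can be written as
\[
\widetilde{T^{*}T}\;=\;(T^{*}T)^{1/2}\,U\,(T^{*}T)^{1/2}\;=\;|T|U|T|,
\]
so that $f(\omega(\widetilde{T^{*}T}))=f(\omega(|T|U|T|))$. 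Substituting both identifications into the displayed bound produces the corollary exactly.

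The only mildly delicate point, and the place where one must be careful, is the choice of partial isometry inserted into the Aluthge formula for $T^{*}T$: one uses the $U$ fixed in the hypothesis $T=U|T|$ rather than the projection-type partial isometry arising from the polar decomposition of $T^{*}T$ viewed as a positive operator in its own right. Once this convention is adopted, no further computation is required, so I do not foresee any substantive obstacle beyond these straightforward substitutions.
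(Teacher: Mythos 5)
Your route (specializing the last theorem to the pair $(T,T^{*}T)$) is certainly the intended one --- the paper states this corollary with no proof at all --- but the step you yourself flag as ``mildly delicate'' is exactly where the argument breaks, and it cannot be repaired by ``adopting a convention.'' The Aluthge transform in the theorem is built from the operator's \emph{own} canonical polar decomposition, and this is what the theorem's proof actually uses. For the positive operator $T^{*}T$ one has $|T^{*}T|=T^{*}T$, the partial isometry in its polar decomposition is the projection onto $\overline{\operatorname{ran}}(T^{*}T)$, and hence $\widetilde{T^{*}T}=T^{*}T$, so $\omega\bigl(\widetilde{T^{*}T}\bigr)=\|T\|^{2}$. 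You cannot instead insert the $U$ from $T=U|T|$: it does not even factor $T^{*}T$, since $U|T^{*}T|=U|T|^{2}=T|T|\neq T^{*}T$ in general. Consequently the honest specialization of the theorem only gives
\[
\omega_{f}\left(T,T^{*}T\right)\le f^{-1}\left(\frac{f(\|T\|)+f\bigl(\omega(\widetilde{T})\bigr)+2f\bigl(\|T\|^{2}\bigr)}{2}\right),
\]
which is weaker than the stated bound, because $\omega(|T|U|T|)\le\|T\|^{2}$ and $f$ is increasing. (In fact $|T|U|T|$ is the Aluthge transform of $T|T|$, whose polar decomposition is $U|T|^{2}$; so your computation proves the displayed right-hand side of the corollary as a bound for $\omega_{f}(T,T|T|)$, not for $\omega_{f}(T,T^{*}T)$.)

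The gap is not merely technical: the inequality as stated is false, so no argument can close it. Take $T=\begin{pmatrix}0&1\\0&0\end{pmatrix}$ and $f(t)=t$, which is increasing and convex. Then $\|T\|=1$, $\widetilde{T}=0$ and $|T|U|T|=0$, so the right-hand side of the corollary equals $1$, whereas
\[
\omega_{f}\left(T,T^{*}T\right)=\sup_{\|x\|=1}\left(\left|\left\langle Tx,x\right\rangle\right|+\|Tx\|^{2}\right)=\frac{1+\sqrt{2}}{2}>1 .
\]
So either the last summand must be replaced by $f(\|T\|^{2})$ (i.e.\ use $\widetilde{T^{*}T}=T^{*}T$, giving the weaker but correct bound displayed above), or the left-hand side must be changed to $\omega_{f}(T,T|T|)$; with the incorrect identification removed, your argument proves precisely these corrected statements, and the flaw you noticed should have been treated as an error in the target inequality rather than a convention to adopt.
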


\subsection*{Declarations}
\begin{itemize}
\item {\bf{Availability of data and materials}}: Not applicable
\item {\bf{Competing interests}}: The authors declare that they have no competing interests.
\item {\bf{Funding}}: Not applicable
\item {\bf{Authors' contributions}}: Authors declare that they have contributed equally to this paper. All authors have read and approved this version.
\end{itemize}




\vskip 0.5 true cm

\noindent{\tiny (M. W. Alomari) Department of Mathematics, Faculty of Science and Information Technology, Irbid National University,
Irbid 21110, Jordan}
	
\noindent	{\tiny\textit{E-mail address:} mwomath@gmail.com}

\vskip 0.3 true cm

\noindent{\tiny (M. Sababheh) Vice President, Princess Sumaya University for Technology, Amman, Jordan}
	
\noindent	{\tiny\textit{E-mail address:} sababheh@psut.edu.jo}

\vskip 0.3 true cm

\noindent{\tiny (C. Conde)  Instituto de Ciencias, Universidad Nacional de General Sarmiento  and  Consejo Nacional de Investigaciones Cient\'ificas y Tecnicas, Argentina}

\noindent{\tiny \textit{E-mail address:} cconde@campus.ungs.edu.ar}

\vskip 0.3 true cm

\noindent{\tiny (H. R. Moradi) Department of Mathematics, Payame Noor University (PNU), P.O. Box, 19395-4697, Tehran, Iran
	
\noindent	\textit{E-mail address:} hrmoradi@mshdiau.ac.ir}


\begin{thebibliography}{9}

\bibitem{O-F} 
A. Abu-Omar, F. Kittaneh, \textit{A generalization of the numerical radius}, Linear Algebra Appl. 569 (2019), 323--334.

\bibitem{Alomari} 
M. W. Alomari, K. Shebrawi, and C. Chesneau, {\it Some generalized Euclidean operator radius inequalities}, Axioms. (2022): 11, 285. \url{https://doi.org/10.3390/axioms11060285}

\bibitem{alu}
 A. Aluthge, {\it Some generalized theorems on $p$-hyponormal operators}, Integral Equ. Oper. Theory. 24 (1996), 497--501.

\bibitem{baj} 
A. Bajmaeh, M. E. Omidvar, {\it Improved inequalities for the extension of Euclidean numerical radius}, Filomat. 33(14) (2019), 4519--4524.

\bibitem{kais} 
H. Baklouti, K. Feki, and O. A. M. Sid Ahmed, {\it Joint numerical ranges of operators in semi-Hilbertian spaces}, Linear Algebra Appl. 555 (2018), 266--284.

\bibitem{bhatia}
R. Bhatia, {\it Positive definite matrices}, Princeton University Press, Princeton, 2007.

 \bibitem{bou} 
 A. Bourhim, M. Mabrouk, {\it Numerical radius and product of elements in $C^*$-algebras}, Linear Multilinear Algebra. 65(6) (2017), 1108--1116.

\bibitem{3}
S. S. Dragomir, {\it Some inequalities of Kato type for sequences of operators in Hilbert spaces}, Publ. Res. Inst. Math. Sci. 48(4) (2012), 937--955.

\bibitem{Drag_eucl} 
S. S. Dragomir, {\it Some inequalities for the Euclidean operator radius of two operators in Hilbert spaces}, Linear Algebra Appl. 419 (2006), 256--264.

\bibitem{Drag_Hind} 
S. S. Dragomir, {\it Upper bounds for the Euclidean operator radius and applications}, J. Ineq. Appl. (2008): Article ID 472146.

\bibitem{Furuta}
T. Furuta, {\it An extension of the Heinz--Kato theorem}, Proc. Amer. Math. Soc. 120(3) (1994), 785--787.

\bibitem{H}
G. H. Hardy, J. E. Littlewood, and G. P\'{o}lya, {\it Inequalities}, 2nd ed., Cambridge Univ. Press, Cambridge, 1988.

\bibitem{Ki}
F. Kittaneh, \textit{A numerical radius inequality and an estimate for the numerical radius of the Frobenius companion matrix}, Studia Math. 158(1) (2003), 11--17.

\bibitem{Kil}
F. Kittaneh, \textit{Numerical radius inequalities for Hilbert space operators}, Studia Math. 168(1) (2005), 73--80.

\bibitem{Mor_lama}
H. R. Moradi, M. Sababheh, {\it More accurate numerical radius inequalities (II)}, Linear Multilinear Algebra. 69(5) (2021), 921--933.

\bibitem{Mor_filomat}
H. R. Moradi, M. Sababheh, {\it New estimates for the numerical radius}, Filomat. 35(14) (2021), 4957--4962.

\bibitem{1}
H. P. Mulholland, {\it On generalizations of Minkowski's inequality in the form of a triangle inequality}, Proc. Lond. Math. Soc. 2(1) (1949), 294--307.

\bibitem{2}
G. Popescu, {\it Unitary invariants in multivariable operator theory}, Mem. Amer. Math. Soc. 200 (2009), No. 941, vi+91 pp.



\bibitem{Sab_lama}
M. Sababheh, H. R. Moradi, {\it More accurate numerical radius inequalities (I)}, Linear Multilinear Algebra. 69(10) (2021), 1964--1973.

 \bibitem{sab_num_lama} 
 M. Sababheh, {\it Heinz-type numerical radii inequalities}, Linear Multilinear Algebra. 67(5) (2019), 953--964.

 %
\bibitem{sahoo} 
M. E. Omidvar, H. R. Moradi, {\it New estimates for the numerical radius of Hilbert space operators}, Linear Multilinear Algebra. 69(5) (2021), 946--956.

 \bibitem{Sattari} 
M. E. Omidvar, H. R. Moradi, \textit{Better bounds on the numerical radii of Hilbert space operators}, Linear Algebra Appl. 604 (2020), 265--277.
%
\bibitem{satt_mosl_shebr} 
M. Sattari, M. S. Moslehian, and K. Shebrawi, {\it Extension of Euclidean operator radius inequalities}, Math. Scand. 20 (2017), 129--144.

 \bibitem{sheyb} 
 S. Sheybani, M. Sababheh, and H. R. Moradi, {\it Weighted inequalities for the numerical radius}, Vietnam J. Math. (2021). \url{https://doi.org/10.1007/s10013-021-00533-4}

\bibitem
{Sab_aofa}
 A. Sheikhhosseini, M. Khosravi, and M. Sababheh, {\it The weighted numerical radius}, Ann. Funct. Anal. 13(3) (2022). \url{https://doi.org/10.1007/s43034-021-00148-3}

\bibitem{sheik_mosl}
 A. Sheikhhosseini, M. S. Moslehian, and K. Shebrawi, {\it Inequalities for generalized Euclidean operator radius via Young's inequality}, J. Math. Anal. Appl. 445 (2017), 1516-1529.



\bibitem{Z-M}
 T. Yamazaki, \textit{On upper and lower bounds of the numerical radius and an equality condition}, Studia math. 178 (2007), 83--89.



 

\end{thebibliography}
\end{document}